\documentclass[12pt]{article}
\usepackage{amssymb}
\usepackage{amsfonts}
\usepackage{amsmath}
\usepackage[usenames]{color}
\usepackage{mathrsfs}
\usepackage{amsfonts}
\usepackage{amssymb,amsmath}
\usepackage{CJK}
\usepackage{cite}
\usepackage{cases}
\usepackage{amsthm}

\pagestyle{plain}
\oddsidemargin -25pt
\evensidemargin -25pt
\topmargin -40pt
\textwidth 6.5truein
\textheight 9.35truein
\parskip .01 truein
\baselineskip 6pt

\def\cl{\centerline}

\def\vs{\vspace*}

\def\L{\mathcal{L}}

\def\G{\mathcal{G}}
\def\Z{\mathbb{Z}}
\def\N{\mathbb{N}}

\def\C{\mathbb{C}}

\def\ni{\noindent}

\numberwithin{equation}{section}
\newtheorem{theo}{Theorem}[section]
\newtheorem{defi}[theo]{Definition}
\newtheorem{coro}[theo]{Corollary}
\newtheorem{lemm}[theo]{Lemma}
\newtheorem{exam}[theo]{Example}

\newtheorem{clai}{Claim}
\newtheorem{case}{Case}

\newtheorem{rema}[theo]{Remark}

\newtheorem{remark}[theo]{Remark}

\begin{document}
\begin{center}
\cl{\large\bf \vs{6pt} A family of new simple modules over the}
\cl{\large\bf Schr\"{o}dinger-Virasoro algebra\,{$^*\,$}}
\footnote {$^*\,$ Supported by the National Natural Science Foundation of China (No. 11371278, 11431010, 11501515) and the Zhejiang Provincial Natural Science Foundation of China (No.~LQ16A010011).
\\\indent\ \ $^\dag\,$ rebel1025@126.com
}
\cl{ Haibo Chen$^{1,\dag}$, Yanyong Hong$^{2}$, Yucai Su$^1$}

\cl{\small 1. Department of Mathematics, Tongji University, Shanghai
200092, China}
\cl{\small 2. Department of Science, Zhejiang Agriculture and Forestry University, Hangzhou 311300, China}
\end{center}

{\small
\parskip .005 truein
\baselineskip 3pt \lineskip 3pt

\noindent{{\bf Abstract:}
In this article, a large  class of simple modules over the  Schr\"{o}dinger-Virasoro algebra $\mathcal{G}$ are constructed, which
include highest weight modules and Whittaker modules.
These modules are determined by the simple modules over the finite-dimensional quotient algebras of some subalgebras. Moreover, we show that all simple modules of $\mathcal{G}$ with
locally finite actions of elements in a certain positive part belong to this class of simple modules. Similarly, a large  class of simple modules over the  $W$-algebra $W(2,2)$ are constructed.
\vs{5pt}

\ni{\bf Key words:}
 Schr\"{o}dinger-Virasoro
 algebra, $W$-algebra, Highest weight module, Whittaker module, Simple module.}

\ni{\it Mathematics Subject Classification (2010):} 17B10, 17B20, 17B65, 17B66, 17B68.}
\parskip .001 truein\baselineskip 6pt \lineskip 6pt
\section{Introduction}
Throughout this paper, we denote by $\C,\Z,\N$  and $\Z_+$ the sets of complex numbers, integers, nonnegative integers and positive integers, respectively.
 All vector spaces and Lie algebras are over $\C$.
 For a Lie algebra $\mathcal{L}$, we denote by $\mathcal{U}(\mathcal{L})$ the    universal enveloping algebra of $\mathcal{L}$.

The Schr\"{o}dinger-Virasoro algebra  is an extension of the Virasoro Lie algebra by
a nilpotent Lie algebra formed with a bosonic current of weight $\frac{3}{2}$
 and a bosonic current of
weight $1$. It was introduced
in the context of non-equilibrium statistical physics during the process
of investigating the free Schr\"{o}dinger   equations (see \cite{M1}).
From then on, the Schr\"{o}dinger-Virasoro algebra  attracted a lot of
attentions from researchers (see, e.g., \cite{TZ,RU,U,HLS,LS,ZT} ).
Now we recall the definition of the  {\em Schr\"{o}dinger-Virasoro algebra} $\mathcal{G}$, which is
 an infinite dimensional  Lie algebra
with the $\C$-basis  $\{M_m,Y_{m+\frac{1}{2}},L_m,C
\mid m\in \Z\}$ and the following   Lie brackets:
\begin{equation}\label{def1.1}
\aligned
&[L_m,L_n]= (n-m)L_{m+n}+\delta_{m+n,0}\frac{m^{3}-m}{12}C,\\&
[L_m,Y_{n+\frac{1}{2}}]= \Big(n+\frac{1-m}{2}\Big)Y_{m+n+\frac{1}{2}},\
 [Y_{m+\frac{1}{2}},Y_{n+\frac{1}{2}}]= (n-m)M_{m+n+1},\\&
 [L_m,M_n]=n M_{m+n},\ [M_m,M_n]=[M_m,Y_{n+\frac{1}{2}}]=[\G,C]=0, \quad \forall  m,n\in\Z.
\endaligned
\end{equation}
Note that  the center of $\G$ is spanned by $\{M_0,C\}$. In addition, the  Schr\"{o}dinger-Virasoro algebra is a special case for the
generalized Schr\"{o}dinger-Virasoro algebra (see \cite{TZ}).

 The highest weight modules and  Whittaker modules are  two classes of important modules,
especially for
infinite-dimensional Lie algebras such as Virasoro algebras, Heisenberg-Virasoro algebras,  affine Kac-Moody algebras and so on.
The construction of highest weight modules is one of the efficient ways to obtain simple weight modules  (see, e.g., \cite{CG,KV,KR,MZ,HJ,MZ1}),
while Whittaker modules become popular in recent years.
It is well-known that Whittaker modules for $sl(2)$ were first discovered  by Arnal and Pinczon in \cite{AP}.
At the same time, the
versions of Whittaker modules for finite dimensional complex semisimple Lie algebras were
introduced  by   Kostant (see \cite{K}).
Since then, Whittaker modules over various Lie algebras  draw a lot of
attentions from the mathematicians and physicists
(see, e.g., \cite{ME1,ME2,OW,LWZ,GL,ALZ}). Moreover, the Verma modules and Whittaker modules for the  Schr\"{o}dinger-Virasoro algebra are investigated in  \cite{TZ} and \cite{ZTL}, respectively.

The actions of elements in the positive part of the algebra are locally finite,
which is the same property of highest weight modules and Whittaker modules.
This makes us study such a class of modules in a uniform way.
Motivated by \cite{CG,MZ},  we construct  a large  family of new simple modules over  the Schr\"{o}dinger-Virasoro algebra.
The highest weight modules and Whittaker modules are included and other modules, which are not weight modules, are new. Moreover, a class of new simple modules of $W(2,2)$ are constructed similarly.

Let us now briefly describe how this  paper is organized.
 In Section 2, we recall some fundamental definitions about what we need in the following. In Section 3, a class of new $\mathcal{G}$-modules are constructed, which are induced from simple modules over the finite-dimensional
quotient algebras of  some subalgebras. This result recovers one of the main results of   Verma modules for a special case   in \cite{TZ} and the main results of  Whittaker modules in \cite{ZTL}.
In addition,
it is shown that any simple module with   locally finite actions of $M_i,(1-\delta_{i,0})Y_{i-\frac{1}{2}},L_i$ (or equivalently, locally nilpotent actions of $M_i,(1-\delta_{i,0})Y_{i-\frac{1}{2}},L_i$ as we shall prove) for
sufficiently large $i\in\N$ must be one of the modules constructed above.
In Section 4, some examples of simple $\mathcal{G}$-modules are presented. Finally, by the similar method, we describe new simple modules
over the $W$-algebra $W(2,2)$.

The main results of this paper are summarized in Theorems \ref{th1}, \ref{th2}, \ref{th51} and \ref{th52}.
\section{Preliminaries}
In this section, we shall construct a class of   induced   $\G$-modules $\mathrm{Ind}(V)$, where $V$ is a simple module. First, some definitions and  results  for later use are recalled.
\begin{defi}\rm
Let $V$ be a module for a Lie algebra $\L$ and $x\in\L$.

{\rm (1)} If for any $v\in V$ there exists $n\in\Z_+$ such that $x^nv=0$,  then we call that the action of  $x$  on $V$ is {\em locally nilpotent}.
Similarly, the action of $\L$   on $V$  is {\em locally nilpotent} if for any $v\in V$ there exists $n\in\Z_+$ such that $\L^nx=0$.

{\rm (2)} If for any $v\in V$ we have $\mathrm{dim}(\sum_{n\in\Z_+}\C x^nv)<+\infty$, then we call that the action of $x$ on $V$ is  {\em locally finite}.
Similarly, the action of $\L$  on $V$  is  {\em locally finite}
 if for any $v\in V$ we have $\mathrm{dim}(\sum_{n\in\Z_+} {\L}^nv)<+\infty$.
\end{defi}
It is easy to see that  the action of $x$ on $V$ is locally nilpotent implies that the action of $x$ on $V$ is locally finite.
If $\L$ is a finitely generated Lie algebra, then we have that the action of $\L$ on $V$ is locally nilpotent implies that the action of
 $\L$
 on $V$ is locally finite.

Denote by  $\mathbb{M}$  the set of all infinite vectors of the form $\underline{i}:=(\ldots, i_2, i_1)$ with entries in $\N$,
satisfying the condition that the number of nonzero entries is finite. Let $\underline{0}$ denote the element $(\ldots, 0, 0)\in\mathbb{M}$ and
for
$i\in\Z_+$ let $\epsilon_i$ denote the element $(\ldots,0,1,0,\ldots,0)\in\mathbb{M}$,
where $1$ is
in the $i$'th  position from right. For any $\underline{i}\in\mathbb{M}$, we write
$$\mathbf{w}(\underline{i})=\sum_{s\in\Z_+}s\cdot i_s,$$
which is a nonnegative integer. For any nonzero  $\underline{i}\in\mathbb{M}$, let $p$ and $q$ be the largest and smallest integers such that $i_p	 \neq0$ and $i_q	\neq0$ respectively,
and define  $\underline{i}^\prime=\underline{i}-\epsilon_p$
and  $\underline{i}^{\prime\prime}=\underline{i}-\epsilon_q$.

\begin{defi}\rm
{\rm (1)} Denote by $>$ the   {\em lexicographical total order}  on  $\mathbb{M}$, defined as follows: for any $\underline{i},\underline{j}\in\mathbb{M}$
$$\underline{j} >  \underline{i} \ \Leftrightarrow \ \mathrm{ there\ exists} \ r\in\Z_+ \ \mathrm{such \ that} \ (j_s=i_s,\ \forall s>r) \ \mathrm{and} \ j_r>i_r.$$
{\rm (2)} Denote by $\succ$ the   {\em reverse  lexicographical  total order}  on  $\mathbb{M}$,  defined as follows: for any $\underline{i},\underline{j}\in\mathbb{M}$
$$\underline{j} \succ \underline{i} \ \Leftrightarrow \ \mathrm{ there\ exists} \ r\in\Z_+ \ \mathrm{such \ that} \ (j_s=i_s,\ \forall 1\leq s<r) \ \mathrm{and} \ j_r>i_r.$$
Now we can induce a {\em principal total order} on $\mathbb{M}\times\mathbb{M}\times\mathbb{M}$, still denoted by $\succ$:
\begin{equation*}\aligned
(\underline{i},\underline{j},\underline{k}) \succ (\underline{l},\underline{m},\underline{n})\  \Leftrightarrow \ \   & (\underline{k},\mathbf{w}(\underline{k}))\succ(\underline{n},\mathbf{w}(\underline{n})) \quad \mathrm{or}\\&
\underline{k}=\underline{n}\
\mathrm{and}\ (\underline{j},\mathbf{w}(\underline{j})) \succ (\underline{m},\mathbf{w}(\underline{m}))
\quad \mathrm{or}\\&
 \underline{k}=\underline{n}, \ \underline{j}=\underline{m} \
\mathrm{and }\ \underline{i} > \underline{l},\quad \forall\underline{i},\underline{j},\underline{k},\underline{l},\underline{m},\underline{n}\in\mathbb{M}.
\endaligned
\end{equation*}
\end{defi}
For any $d_1,d_2\in\N$ with $d_1\geq2d_2-1$,   set
$$\G_{d_1,d_2}=\sum_{i\in\N}(\C M_{i-d_1}\oplus\C (1-\delta_{i,0})Y_{i-d_2-\frac{1}{2}}\oplus\C L_i)\oplus\C C.$$
Then, it is easy to see that $\G_{d_1,d_2}$ is a subalgebra of $\G$.

Letting  $V$ be a simple $\G_{d_1,d_2}$-module, then we have the induced   $\G$-module
$$\mathrm{Ind}(V)=\mathcal{U}(\G)\otimes_{\mathcal{U}(\G_{d_1,d_2})}V.$$

Since simple modules over one of subalgebras of $\G$ containing the central elements $M_0$ and $C$ are usually considered in the following,
we always assume that the actions of $M_0$ and $C$ are scalars $\nu_0$ and $c$ respectively.

Fix $d_1,d_2\in\N$  and let $V$ be a simple $\G_{d_1,d_2}$-module. For $\underline{i},\underline{j},\underline{k}\in\mathbb{M}$, we denote
$$M^{\underline{i}} Y^{\underline{j}} L^{\underline{k}}=\ldots M_{-d_1-2}^{i_2} M_{-d_1-1}^{i_1}\ldots Y_{-d_2-\frac{3}{2}}^{j_2} Y_{-d_2-\frac{1}{2}}^{j_1}\ldots L_{-2}^{k_2} L_{-1}^{k_1}\in \mathcal{U}(\mathcal{G}).$$
According to the $\mathrm{PBW}$ Theorem, every element of $\mathrm{Ind}(V)$ can be uniquely written in the
following form
\begin{equation}\label{def2.1}
\sum_{\underline{i},\underline{j},\underline{k}\in\mathbb{M}}M^{\underline{i}} Y^{\underline{j}} L^{\underline{k}} v_{\underline{i},\underline{j},\underline{k}},
\end{equation}
where all  $v_{\underline{i},\underline{j},\underline{k}}\in V$ and only finitely many of them are nonzero. For any $v\in\mathrm{Ind}(V)$ as in  \eqref{def2.1}, we denote by $\mathrm{supp}(v)$ the set of all $(\underline{i},\underline{j},\underline{k})\in \mathbb{M}\times \mathbb{M}\times\mathbb{M}$  such that $v_{\underline{i},\underline{j},\underline{k}}\neq0$.
 For a nonzero $v\in \mathrm{Ind}(V)$, we write $\mathrm{deg}(v)$  the maximal (with respect to the principal total order on $\mathbb{M}\times\mathbb{M}\times\mathbb{M}$) element in $\mathrm{supp}(v)$, called the {\em degree} of $v$. Note that here and later
 we make the
convention that  $\mathrm{deg}(v)$ only for  $v\neq0$.

\section{Characterization of simple modules}
The purpose of this section is to state two main results of this paper. We first prove that the   induced $\mathcal{G}$-module $\mathrm{Ind}(V)$ is simple  under certain   conditions which appeared in  Theorem \ref{th1}.  Then we shall show that under the   conditions in  Theorem \ref{th1}  any simple $\mathcal{G}$-module with locally finite actions of elements $M_i,(1-\delta_{j,0})Y_{j-\frac{1}{2}},L_k$ for sufficiently large $i,j,k\in\N$ is isomorphic to one of the induced  $\mathcal{G}$-modules $\mathrm{Ind}(V)$.

 Now we can summarize the key result in this section as follows.
\begin{theo}\label{th1}
Let $d_1,d_2\in\N$ and $V$ be a simple $\G_{d_1,d_2}$-module and  there exists  $t\in\N$  satisfying the following two conditions:

\noindent
{\rm (a)} the action of  $M_{t}$   on  $V$ is injective;

\noindent
{\rm (b)}   $M_iV=Y_{j-\frac{1}{2}}V=L_kV=0$ for  all $i>t,j>t+d_2$ and $k>t+d_1$.

\noindent
Then we have

\noindent
{\rm (1)} $\mathrm{Ind}(V)$ is a simple $\mathcal{G}$-module;

\noindent
{\rm (2)} the actions of  $M_i,Y_{j-\frac{1}{2}},L_k$ on $\mathrm{Ind}(V)$  for  all $i>t,j>t+d_2$ and $k>t+d_1$  are locally nilpotent.
\end{theo}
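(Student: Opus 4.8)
The plan is to establish both parts by a single "degree-lowering" argument on $\mathrm{Ind}(V)$ with respect to the principal total order $\succ$ on $\mathbb{M}\times\mathbb{M}\times\mathbb{M}$, exploiting condition (a) to guarantee that certain coefficients survive after applying a carefully chosen element of the positive part, and condition (b) to ensure that those applications actually lower the degree rather than raising it. For part (1), let $N$ be a nonzero submodule of $\mathrm{Ind}(V)$ and pick $0\ne v\in N$ of \emph{minimal} degree $\deg(v)=(\underline{i},\underline{j},\underline{k})$ with leading coefficient $v_{\underline{i},\underline{j},\underline{k}}\in V$. The first goal is to show that in fact $\deg(v)=(\underline{0},\underline{0},\underline{0})$, i.e.\ that $v\in V$; once we know $N\cap V\ne 0$, simplicity of $V$ as a $\G_{d_1,d_2}$-module forces $N\cap V=V$, and then $N\supseteq \mathcal U(\G)V=\mathrm{Ind}(V)$, so $N=\mathrm{Ind}(V)$.

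To reduce the degree I would process the three blocks $L^{\underline k}$, $Y^{\underline j}$, $M^{\underline i}$ in the order dictated by $\succ$ (first kill the $L$-part, then the $Y$-part, then the $M$-part), since a nonzero $\underline k$ dominates everything in $\underline i,\underline j$, etc. Suppose $\underline k\ne\underline 0$ and let $p$ be the largest index with $k_p\ne 0$. Consider acting by $L_{t+d_1+p}$ on $v$: by the bracket relations, $\mathrm{ad}(L_{t+d_1+p})$ sends $L_{-p}\mapsto (\text{scalar})L_{t+d_1}$, and by condition (b) $L_{t+d_1}$, being of index $>t+d_1$... actually $t+d_1+(-p)$ with $p\ge 1$ gives index $\le t+d_1-1$, so I instead use $L_{t+d_1+1}$-type elements tuned so that the commutator with the top $L_{-p}$ produces an operator that $V$ annihilates (pushing the resulting term strictly below in order), while the "interior" terms and the terms where $L_{t+d_1+p}$ passes through $M^{\underline i}Y^{\underline j}$ to hit $V$ vanish by (b). The net effect is $L_{t+d_1+p}\cdot v$ has degree strictly smaller than $\deg(v)$ but is still nonzero because the surviving leading coefficient is a nonzero multiple of $M_{t+?}^{?}\cdots v_{\underline i,\underline j,\underline k}$—and here is where condition (a) enters: any obstruction to nonvanishing is an application of $M_t$ (or a power thereof) to the leading coefficient, which is injective. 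Iterating, we strip $\underline k$ down to $\underline 0$; then the analogous moves with $Y_{t+d_2+q}$ strip $\underline j$; then with the appropriate $M_{t+\cdot}$'s, using that $[Y,Y]\sim M$ and that $M_t$ acts injectively, we strip $\underline i$. Minimality of $\deg(v)$ is contradicted unless $v$ already lay in $V$.

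For part (2), fix $i>t$ (resp.\ $j>t+d_2$, $k>t+d_1$) and an arbitrary element $w=\sum M^{\underline i}Y^{\underline j}L^{\underline k}v_{\underline i,\underline j,\underline k}\in\mathrm{Ind}(V)$. I would show that repeated application of $M_i$ (resp.\ $Y_{j-\frac12}$, $L_k$) strictly decreases $\deg$ and after finitely many steps lands in $V$, which is annihilated by all of these by (b); hence the action is locally nilpotent. Concretely, $M_i$ commutes with all $M^{\underline i}$, and moving it past $Y^{\underline j}$ and $L^{\underline k}$ produces $M$-terms of index $\ge i>t$ (annihilating $V$) plus terms with strictly fewer $Y$'s or $L$'s, i.e.\ strictly smaller degree; the same bookkeeping handles $Y_{j-\frac12}$ (using $[Y,Y]\sim M$, $[L,Y]\sim Y$) and $L_k$ (using $[L,L]\sim L$, $[L,Y]\sim Y$, $[L,M]\sim M$), always with the "escaping" high-index generators landing on $V$ and dying by (b). Since only finitely many $(\underline i,\underline j,\underline k)$ occur in $w$, the maximal total "length" $\mathbf w(\underline i)+\mathbf w(\underline j)+\mathbf w(\underline k)$ of terms is bounded and strictly drops, giving nilpotency.

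The main obstacle, and the step deserving the most care, is the degree-lowering computation in part (1): one must choose the exact positive-part element $L_{t+d_1+p}$, $Y_{t+d_2+q}$, or $M_{t+p}$ at each stage so that (i) all terms arising from commuting it to the right past the remaining PBW monomial and onto $V$ are killed by condition (b), (ii) the one term that "cancels" the top generator of the current block is produced with a nonzero scalar, and (iii) the leading coefficient of the result, which is some word in $M_t$ (and possibly other $M$'s) applied to $v_{\underline i,\underline j,\underline k}$, is nonzero—this is exactly where injectivity of $M_t$ in (a) is indispensable. Verifying that these three requirements are simultaneously met, and that the induced term is genuinely $\succ$-smaller (using that $\mathbf w$ decreases, or that a block is shortened), is the technical heart of the argument; the rest is routine PBW bookkeeping.
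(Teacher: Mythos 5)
Your overall architecture (degree-lowering with respect to the principal order until a nonzero element of $V$ is reached, then invoking simplicity of $V$; part (2) by pushing high-index generators onto $V$ where they die by (b)) matches the paper's, and your sketch of part (2) is essentially right. But the "technical heart" you yourself flag — which generator to apply at each stripping step — is exactly where your proposal breaks down, in two of the three cases. The only operator that condition (a) makes injective on $V$ is $M_t$, so every degree-lowering application must be arranged so that the surviving commutator bottoms out at $M_t$. Since $[L,L]\sim L$ never produces an $M$, you cannot strip the $L$-block with $L$-generators. Concretely, take $v=L_{-1}v_0$ with $0\neq v_0\in V$. Applying $L_a$ with $a>t+d_1$ gives $L_aL_{-1}v_0=(-1-a)L_{a-1}v_0$, which vanishes unless $a=t+d_1+1$, and even then the result is a multiple of $L_{t+d_1}v_0$ — an element that nothing in the hypotheses prevents from being zero. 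The correct move (the paper's Claim 1(1)) is to apply $M_{\hat k+t}$ where $\hat k$ is the \emph{smallest} index with $k_{\hat k}\neq 0$: then $[M_{\hat k+t},L_{-\hat k}]$ is a nonzero multiple of $M_t$, whose injectivity guarantees the new leading coefficient survives. Your treatment of the $M$-block has the same defect in mirror image: applying "appropriate $M_{t+\cdot}$'s" to $M^{\underline i}v_0$ does nothing, since the $M$'s commute and $M_{t+s}v_0=0$; the paper instead applies $L_{\hat i+t+d_1}$, using $[L,M]\sim M$ to again land on $M_t$. Only your $Y$-block step (apply a $Y$ so that $[Y,Y]\sim M$ produces $M_t$) agrees with the paper.

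So the pairing must be crosswise — $M$ against $L$, $Y$ against $Y$, $L$ against $M$ — and with the indices chosen so the bracket lands precisely on $M_t$ (hence $\hat k=\min$, $\hat j=\min$, but $\hat i=\max$, to match the sign conventions of the PBW ordering and the principal order). A secondary point: you select $v$ of \emph{minimal} degree in a submodule, but the principal order is not obviously a well-order (e.g.\ $\epsilon_1\succ\epsilon_2\succ\cdots$ in reverse lex before the weight $\mathbf{w}$ is factored in), so you should instead argue as the paper does: each application of the claim strictly shortens the PBW monomial at the leading position, so finitely many applications reach $V$. As written, your argument does not establish that the degree-lowered element is nonzero, which is the whole content of part (1).
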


\begin{proof}
In order to prove part $(1)$ of Theorem \ref{th1}, we first introduce the following claim.
 \begin{clai}\label{claim3.1}
For any $v\in\ \mathrm{Ind}(V)\setminus V$, let $\mathrm{deg}(v)=(\underline{i},\underline{j},\underline{k})$,
$\hat{i}=\mathrm{max}\{s:i_s\neq0\}$ if $\underline{i}\neq\underline{0}$, ${\hat{j}}=\mathrm{min}\{s:j_s\neq0\}$ if $\underline{j}\neq\underline{0}$ and  $\hat{k}=\mathrm{min}\{s:k_s\neq0\}$ if $\underline{k}\neq\underline{0}$.  Then we obtain

\noindent
{\rm (1)}   if $\underline{k}\neq\underline{0}$,  then $\hat{k}>0$ and $\mathrm{deg}(M_{\hat{k}+t}v)=(\underline{i},\underline{j},\underline{k}^{\prime\prime})$;

\noindent
{\rm (2)}  if $\underline{k}=\underline{0},\underline{j}\neq\underline{0}$,  then ${\hat{j}}>0$ and $\mathrm{deg}(Y_{{\hat{j}}+t+d_2-\frac{1}{2}}v)=(\underline{i},\underline{j}^{\prime\prime},0)$;

\noindent
{\rm (3)} if $\underline{j}=\underline{k}=\underline{0},\underline{i}\neq\underline{0}$,  then $\hat{i}>0$ and $\mathrm{deg}(L_{\hat{i}+t+d_1}v)=(\underline{i}^{\prime},0,0)$.
\end{clai}
To prove this, we assume that $v$ is of the form in \eqref{def2.1}.

{\rm (1)} It is enough to show that we want to have  by comparing the degree. Now we consider those $v_{\underline{x},\underline{y},\underline{z}}$
with $$M_{\hat{k}+t}M^{\underline{x}}Y^{\underline{y}}L^{\underline{z}}v_{\underline{x},\underline{y},\underline{z}}\neq0.$$
Note that $M_{\hat{k}+t}v_{\underline{x},\underline{y},\underline{z}}=0$ for any  $(\underline{x},\underline{y},\underline{z})\in \mathrm{supp}(v)$.
One can easily check that
$$M_{\hat{k}+t}M^{\underline{x}}Y^{\underline{y}}L^{\underline{z}}v_{\underline{x},\underline{y},\underline{z}}=
M^{\underline{x}}Y^{\underline{y}}[M_{\hat{k}+t},L^{\underline{z}}]v_{\underline{x},\underline{y},\underline{z}}.$$
By (a), $M_{t}v_{\underline{x},\underline{y},\underline{z}}\neq0$.
If $\underline{z}=\underline{k}$, it is easy to see that
$$\mathrm{deg}(M_{\hat{k}+t}M^{\underline{x}}Y^{\underline{y}}L^{\underline{z}}v_{\underline{x},\underline{y},\underline{z}})=
(\underline{x},\underline{y},\underline{k}^{\prime\prime})\preceq (\underline{i},\underline{j},\underline{k}^{\prime\prime}),$$
where the equality holds if and only if $\underline{y}=\underline{j},\underline{x}=\underline{i}$.

Now we suppose  $(\underline{z},\mathbf{w}(\underline{z}))\prec(\underline{k},\mathbf{w}(\underline{k}))$ and denote  $$\mathrm{deg}(M_{\hat{k}+t}M^{\underline{x}}Y^{\underline{y}}L^{\underline{z}}v_{\underline{x},\underline{y},\underline{z}})
=(\underline{x}_1,\underline{y}_1,\underline{z}_1)\in \mathbb{M}\times \mathbb{M}\times \mathbb{M}.$$ If $\mathbf{w}(\underline{z})<\mathbf{w}(\underline{k})$,
then we get $\mathbf{w}(\underline{z}_1)\leq \mathbf{w}(\underline{z})-\hat{k}<\mathbf{w}(\underline{k})-\hat{k}=\mathbf{w}(\underline{k}^{\prime\prime})$,
which gives rise to $(\underline{x}_1,\underline{y}_1,\underline{z}_1)\prec(\underline{i},\underline{j},\underline{k}^{\prime\prime})$.

Then we suppose $\mathbf{w}(\underline{z})=\mathbf{w}(\underline{k})$ and ${\underline{z}}\prec{\underline{k}}$. Let $\hat{z}:=\mathrm{min}\{s:z_s\neq0\}>0$.
If $\hat{z}>\hat{k}$, it is easy to see that $\mathbf{w}(\underline{z}_1)<\mathbf{w}(\underline{z})-\hat{k}=\mathbf{w}(\underline{k}^{\prime\prime})$. If $\hat{z}=\hat{k}$,
we can similarly deduce $(\underline{x}_1,\underline{y}_1,\underline{z}_1)=(\underline{x},\underline{y},\underline{z}^{\prime\prime})$.
Since $\underline{z}^{\prime\prime}\prec \underline{k}^{\prime\prime}$, we have $\mathrm{deg}(M_{\hat{k}+t}M^{\underline{x}}Y^{\underline{y}}L^{\underline{z}}v_{\underline{x},\underline{y},\underline{z}})
=(\underline{x}_1,\underline{y}_1,\underline{z}_1)\prec(\underline{i},\underline{j},\underline{k}^{\prime\prime})$ in both cases.

Combining all the arguments above we conclude that
$\mathrm{deg}(M_{\hat{k}+t}v)=(\underline{i},\underline{j},\underline{k}^{\prime\prime})$, as desired.

{\rm (2)} Now we use   the similar method that appeared in  above.  We consider   $v_{\underline{x},\underline{y},\underline{0}}$
with $$Y_{{\hat{j}}+t+d_2-\frac{1}{2}}M^{\underline{x}}Y^{\underline{y}}v_{\underline{x},\underline{y},\underline{0}}\neq0.$$
Since $Y_{{\hat{j}}+t+d_2-\frac{1}{2}}v_{\underline{x},\underline{y},\underline{0}}=0$ for any  $(\underline{x},\underline{y},\underline{0})\in \mathrm{supp}(v)$,
then we have
$$Y_{{\hat{j}}+t+d_2-\frac{1}{2}}M^{\underline{x}}Y^{\underline{y}}v_{\underline{x},\underline{y},\underline{0}}=
M^{\underline{x}}[Y_{{\hat{j}}+t+d_2-\frac{1}{2}},Y^{\underline{y}}]v_{\underline{x},\underline{y},\underline{0}}.$$
Note that $M_tv_{\underline{x},\underline{y},\underline{0}}\neq0$.
If $\underline{y}=\underline{j}$, it is easy to get that
$$\mathrm{deg}(Y_{{\hat{j}}+t+d_2-\frac{1}{2}}M^{\underline{x}}Y^{\underline{y}}v_{\underline{x},\underline{y},\underline{0}})=
(\underline{x},\underline{y}^{\prime\prime},\underline{0})\preceq (\underline{i},\underline{j}^{\prime\prime},\underline{0}),$$
where the equality holds if and only if $\underline{x}=\underline{i}$.

Now suppose $(\underline{y},\mathbf{w}(\underline{y}))\prec(\underline{j},\mathbf{w}(\underline{j}))$, then we write  $$\mathrm{deg}(Y_{{\hat{j}}+t+d_2-\frac{1}{2}}M^{\underline{x}}Y^{\underline{y}}v_{\underline{x},\underline{y},\underline{0}})
=(\underline{x}_1,\underline{y}_1,\underline{0})\in \mathbb{M}\times \mathbb{M}\times \mathbb{M}.$$ If $\mathbf{w}(\underline{y})<\mathbf{w}(\underline{j})$,
then we get $\mathbf{w}(\underline{y}_1)\leq \mathbf{w}(\underline{y})-{\hat{j}}<\mathbf{w}(\underline{j})-{\hat{j}}=\mathbf{w}(\underline{j}^{\prime\prime})$,
which shows that $(\underline{x}_1,\underline{y}_1,\underline{0})\prec(\underline{i},\underline{j}^{\prime\prime},\underline{0})$.

Then we suppose $\mathbf{w}(\underline{y})=\mathbf{w}(\underline{j})$ and ${\underline{y}}\prec{\underline{j}}$. Let $\hat{y}:=\mathrm{min}\{s:y_s\neq0\}>0$.
If $\hat{y}>{\hat{j}}$, we obtain $\mathbf{w}(\underline{y}_1)<\mathbf{w}(\underline{y})-{\hat{j}}=\mathbf{w}(\underline{j}^{\prime\prime})$. If $\hat{y}={\hat{j}}$,
we can similarly check that $(\underline{x}_1,\underline{y}_1,\underline{0})=(\underline{x},\underline{y}^{\prime\prime},\underline{0})$.
By $\underline{y}^{\prime\prime}\prec \underline{j}^{\prime\prime}$, we have $\mathrm{deg}(Y_{{\hat{j}}+t+d_2-\frac{1}{2}}M^{\underline{x}}Y^{\underline{y}}v_{\underline{x},\underline{y},\underline{0}})
=(\underline{x}_1,\underline{y}_1,\underline{0})\prec(\underline{i},\underline{j}^{\prime\prime},\underline{0})$ in both cases.

Consequently, we conclude that
$\mathrm{deg}(Y_{{\hat{j}}+t+d_2-\frac{1}{2}}v)=(\underline{i},\underline{j}^{\prime\prime},\underline{0})$.

{\rm (3)} Noticing that $L_{\hat{i}+t+d_1}V=0$ and $[L_{\hat{i}+t+d_1},M_{-\hat{i}-d_1}]V\neq0$, then we have
 $$L_{\hat{i}+t+d_1}M^{\underline{x}}v_{\underline{x},\underline{0},\underline{0}}=
 a_{\hat{i}} M^{\underline{x}^\prime}[L_{\hat{i}+t+d_1},M_{-\hat{i}-d_1}]v_{\underline{x},\underline{0},\underline{0}}\quad \mbox{for} \ a_{\hat{i}}\in\C\backslash\{0\},  \underline{x}^\prime\in \mathbb{M}, (\underline{x},\underline{0},\underline{0})\in\mathrm{supp}(v),$$
which easily yields our result.  This proves Claim \ref{claim3.1}.

Using Claim \ref{claim3.1} repeatedly, from any nonzero element $v\in\mathrm{Ind}(V)$ we can reach a nonzero element in
 $\mathcal{U}(\mathcal{G})v\cap V\neq0$, which indicates the simplicity of $\mathrm{Ind}(V)$.
   Part  $(2)$ of Theorem \ref{th1} can be easily checked by a direct calculation.
\end{proof}

\begin{rema}\label{rema3.3} \rm
In Theorem \ref{th1}, when $t=0$, the condition (a) is equivalent to that $\nu_0\neq 0$. In addition, from the above proof, we see that Claim \ref{claim3.1} also holds without the assumption of the simplicity of $V$ as a $\G_{d_1,d_2}$-module.
\end{rema}

Moreover, we have the following corollary.
\begin{coro}
Letting $d_1,d_2$ and $V$ as in Theorem \ref{th1} except that $V$ may not be simple over $\G_{d_1,d_2}$, then we have
$$V=\{v\in\mathrm{Ind}(V)\mid M_iv=Y_{j-\frac{1}{2}}v=L_iv=0,\ \forall  i>t,j>t+d_2,k>t+d_1\}.$$
\end{coro}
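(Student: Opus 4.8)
The plan is to prove the two inclusions separately. For the inclusion $\supseteq$, suppose $v\in\mathrm{Ind}(V)$ satisfies $M_iv=Y_{j-\frac{1}{2}}v=L_kv=0$ for all $i>t$, $j>t+d_2$, $k>t+d_1$, and suppose toward a contradiction that $v\notin V$. Then $\deg(v)=(\underline{i},\underline{j},\underline{k})$ is defined and at least one of $\underline{i},\underline{j},\underline{k}$ is nonzero. By Remark \ref{rema3.3}, Claim \ref{claim3.1} is available without assuming $V$ simple. Applying the appropriate part of Claim \ref{claim3.1}: if $\underline{k}\neq\underline{0}$ then $\hat{k}>0$, so $\hat{k}+t>t+d_1\geq t$ — wait, one must check $\hat{k}+t$ lands in the annihilating range; since $\hat k\ge 1$ we have $\hat k+t\ge t+1>t$, so $M_{\hat k+t}v=0$, yet $\deg(M_{\hat k+t}v)=(\underline i,\underline j,\underline k'')$ is defined, forcing $M_{\hat k+t}v\ne 0$, a contradiction. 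Similarly, if $\underline{k}=\underline{0}$ but $\underline{j}\neq\underline{0}$, then $\hat j\ge 1$ gives $\hat j+t+d_2>t+d_2$, so $Y_{\hat j+t+d_2-\frac12}v=0$, contradicting $\deg(Y_{\hat j+t+d_2-\frac12}v)=(\underline i,\underline j'',\underline 0)$; and if $\underline{j}=\underline{k}=\underline{0}$, $\underline{i}\neq\underline{0}$, then $\hat i+t+d_1>t+d_1$ yields $L_{\hat i+t+d_1}v=0$ against $\deg(L_{\hat i+t+d_1}v)=(\underline i',\underline 0,\underline 0)$. In every case we contradict $v\notin V$, so $v\in V$.

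For the inclusion $\subseteq$, let $v\in V$. By condition (b) of Theorem \ref{th1}, $M_iV=Y_{j-\frac12}V=L_kV=0$ for all $i>t$, $j>t+d_2$, $k>t+d_1$, so every element of $V$ — in particular $v$ — is annihilated by exactly the operators listed in the defining set on the right-hand side. Hence $v$ lies in that set, and $V\subseteq\{v\in\mathrm{Ind}(V)\mid M_iv=Y_{j-\frac12}v=L_iv=0\ \forall i>t,j>t+d_2,k>t+d_1\}$. (I note the displayed set in the corollary writes "$L_iv=0$" with $i>t$ in the second clause and $k>t+d_1$ in the quantifier list; I read this as the natural typo for $L_kv=0$, $k>t+d_1$, matching Theorem \ref{th1}, and will state it that way.)

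Combining the two inclusions gives the claimed equality. The only subtlety — and the step deserving care rather than difficulty — is the first inclusion: one must verify that the raising operator supplied by Claim \ref{claim3.1} in each of the three cases always has index strictly in the annihilating range guaranteed by the hypothesis, which is immediate from $\hat i,\hat j,\hat k\ge 1$ together with the shifts $+t$, $+t+d_2$, $+t+d_1$ appearing in the claim. No new machinery is needed beyond Claim \ref{claim3.1} and the observation (already made in Remark \ref{rema3.3}) that the claim's proof never used simplicity of $V$; the argument is essentially the contrapositive of the simplicity argument in the proof of Theorem \ref{th1}, extracting the annihilator of the top positive operators as exactly $V$.
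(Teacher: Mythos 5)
Your proposal is correct and is exactly the intended argument: the paper states this corollary without proof, as an immediate consequence of Claim \ref{claim3.1} together with Remark \ref{rema3.3} (which notes the claim does not require simplicity of $V$), and your two-inclusion argument — condition (b) for $\subseteq$, and the contradiction via the degree statement of Claim \ref{claim3.1} for $\supseteq$ — is precisely that. Your index checks ($\hat{k}+t>t$, $\hat{j}+t+d_2>t+d_2$, $\hat{i}+t+d_1>t+d_1$) and your reading of the typo $L_iv$ for $L_kv$ are both right.
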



Denote by $\mathcal{G}^{(x,y,z)}$ the subalgebra generated by $M_i,(1-\delta_{j,0})Y_{j-\frac{1}{2}},L_k$
   with  $i\geq x,j\geq y$ and $k\geq z$.
Now we are ready to state the second main result of this section.
\begin{theo}\label{th2}
Let $\nu_0\neq0$ and $S$ be a simple  $\G$-module. Then the following conditions are equivalent:

\noindent
{\rm (1)}  There exists $t\in\Z$ such that the actions of $M_i,(1-\delta_{i,0})Y_{i-\frac{1}{2}},L_i,i\geq t$ on $S$ are locally finite.

\noindent
{\rm (2)}    There exists $t\in\Z$ such that the actions of $M_i,(1-\delta_{i,0})Y_{i-\frac{1}{2}},L_i,i\geq t$ on $S$ are locally nilpotent.

\noindent
{\rm (3)}  There exist $x,y,z\in\Z$ such that  $S$ is a  locally finite $\mathcal{G}^{(x,y,z)}$-module.

\noindent
{\rm (4)}    There exist $x,y,z\in\Z$ such that  $S$ is a  locally nilpotent $\mathcal{G}^{(x,y,z)}$-module.

\noindent
{\rm (5)} There exist $d_1,d_2\in\N$ and  a simple $\G_{d_1,d_2}$-module $V$ satisfying the conditions in Theorem \ref{th1} such that
$S\cong\mathrm{Ind}(V)$.
\end{theo}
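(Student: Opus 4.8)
The plan is to establish the cycle of implications $(1)\Rightarrow(4)$, $(4)\Rightarrow(3)$, $(3)\Rightarrow(5)$, $(5)\Rightarrow(2)$, $(2)\Rightarrow(1)$. Several of these are either trivial or already essentially proved. The implication $(5)\Rightarrow(2)$ is exactly part $(2)$ of Theorem \ref{th1} (with the observation that $\nu_0\neq 0$ forces condition (a) to hold with $t=0$, or more generally some $t$ works by the locally nilpotent conclusion). The implications $(2)\Rightarrow(1)$ and $(4)\Rightarrow(3)$ are immediate from the general fact, recorded right after the first definition, that locally nilpotent implies locally finite. So the substantive work is in $(1)\Rightarrow(4)$ and $(3)\Rightarrow(5)$, plus the easy bookkeeping $(1)\Leftrightarrow(3)$-type reductions.

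For $(1)\Rightarrow(4)$: assume $M_i,(1-\delta_{i,0})Y_{i-\frac12},L_i$ act locally finitely on $S$ for all $i\geq t$. First I would upgrade ``locally finite'' to ``locally nilpotent'' for each such generator individually. The standard trick (cf. \cite{MZ}) is this: if $x$ is an element whose adjoint action is locally nilpotent on enough of $\G$ and $x$ acts locally finitely on the simple module $S$, pick a common eigenvector $w$ for $x$ in a finite-dimensional $x$-invariant subspace, say $xw=\lambda w$. Using the bracket relations in \eqref{def1.1} — e.g. $[L_m,L_n]=(n-m)L_{m+n}+\cdots$, $[L_m,M_n]=nM_{m+n}$, $[Y,Y]\sim M$ — one produces from $w$ a vector on which $x$ acts nilpotently, e.g. by commuting $x$ past a suitable product of raising-type elements to kill the eigenvalue; then by simplicity that vector generates $S$ and local nilpotency propagates. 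This is where I expect the main technical friction: one must carefully choose, for each of the three families $M$, $Y$, $L$, an auxiliary element of $\G$ whose bracket with the given generator lowers or annihilates the relevant eigenvalue, and handle the $Y$-family (half-integer indices, $[Y,Y]\sim M$) and the Virasoro part $L$ separately. Once each generator is locally nilpotent, I would enlarge the index: since $\G^{(x,y,z)}$ is generated by finitely many of these elements (indeed by $M_x,\dots$ together with a couple of $L$'s and $Y$'s, using $[L_m,M_n]=nM_{m+n}$ etc.\ to reach all higher indices), and each acts locally nilpotently, a Lie-theoretic argument (a vector killed by a large power of each of the finitely many generators is killed by a large power of the whole finitely generated nilpotent-acting subalgebra) gives that $S$ is a locally nilpotent $\G^{(x,y,z)}$-module for suitable $x,y,z$; one takes $x=t$, $y=t+1$, $z=t$ or similar, respecting the constraint $d_1\geq 2d_2-1$ when we later translate to $\G_{d_1,d_2}$.

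For $(3)\Rightarrow(5)$: assume $S$ is a locally finite $\G^{(x,y,z)}$-module. Choose $d_1,d_2\in\N$ with $d_1\geq 2d_2-1$ large enough that $\G_{d_1,d_2}$ contains, up to the central elements, the ``top half'' we need; more precisely, pick $d_1,d_2$ and then set $V=\{v\in S\mid M_iv=Y_{j-\frac12}v=L_kv=0\ \text{for all sufficiently large }i,j,k\}$. One shows $V\neq 0$: the positive part acts locally finitely (hence, by the step above, locally nilpotently), so the common kernel of the high-index generators is nonzero — a finite intersection of nonzero subspaces stabilized appropriately, obtained by repeatedly passing to kernels of locally nilpotent operators. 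Then $V$ is naturally a $\G_{d_1,d_2}$-module (the defining relations show $\G_{d_1,d_2}$ preserves these vanishing conditions, using that brackets like $[L_i,M_{j-d_1}]$, $[L_i,L_k]$, $[Y,Y]$ land in indices that are still ``high'' — this is exactly why the offsets $d_1,d_2$ and the inequality $d_1\geq 2d_2-1$ are chosen as they are). By simplicity of $S$ and the universal property, the canonical map $\mathrm{Ind}(V')\to S$ is surjective for $V'$ any simple $\G_{d_1,d_2}$-submodule of $V$ (pick one: since the actions on $V$ are locally finite one can find a simple submodule, possibly after replacing $V$ by a suitable subquotient and absorbing into $d_1,d_2$). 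Finally, Theorem \ref{th1}(1) shows $\mathrm{Ind}(V')$ is already simple once condition (a) holds, which it does because $\nu_0\neq 0$ makes $M_0$ (hence $M_t$ for appropriate $t$, via local nilpotency forcing the injective one to exist) act injectively; a surjection between simple modules is an isomorphism, giving $S\cong\mathrm{Ind}(V')$. The delicate point here is verifying $V'$ satisfies condition (a) of Theorem \ref{th1} — that some $M_t$ acts injectively on $V'$ — which I would deduce from $\nu_0\neq0$ together with the structure of $\G_{d_1,d_2}$-modules, possibly by noting that the set of $t$ with $M_t$ injective is governed by the ideal generated by $M_0$.
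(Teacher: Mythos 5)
Your cycle of implications would be logically adequate if each arrow were proved, and the easy arrows ($(5)\Rightarrow(2)$ via Theorem \ref{th1}(2), $(2)\Rightarrow(1)$, $(4)\Rightarrow(3)$) are fine. But the two arrows you identify as substantive both have genuine gaps, and the first is based on an argument that fails. For $(1)\Rightarrow(4)$ you propose to make each generator act locally nilpotently by finding one vector on which it acts nilpotently and then letting ``local nilpotency propagate by simplicity.'' That propagation requires $\mathrm{ad}(x)$ to be locally nilpotent on $\G$ (so that $x^N u(\mathcal{U}(\G))w$ can be rewritten with $x$'s pushed onto $w$), and it is not: $[L_t,L_n]=(n-t)L_{n+t}$, $[L_t,L_{n+t}]=nL_{n+2t}$, $[L_t,L_{n+2t}]=(n+t)L_{n+3t},\dots$, and at most one of these coefficients vanishes, so $\mathrm{ad}(L_t)^k L_n\neq 0$ for all $k$. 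The paper never proves $(1)\Rightarrow(2)$ or $(1)\Rightarrow(4)$ head-on; it proves $(1)\Rightarrow(5)$ and then reads off all nilpotency statements from the explicit PBW structure of $\mathrm{Ind}(V)$. You should reorganize your cycle accordingly.

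The second gap is inside your $(3)\Rightarrow(5)$ (in the paper, $(1)\Rightarrow(5)$): you need a \emph{nonzero} subspace $V$ annihilated by all $M_i$, $Y_{j-\frac12}$, $L_k$ with $i,j,k$ large, and ``a finite intersection of nonzero subspaces stabilized appropriately'' is not an argument, especially since under hypothesis (1) or (3) you only have local finiteness, not nilpotency. The paper's actual mechanism is the technical heart of the proof: starting from an $L_t$-eigenvector $v$, one shows $\sum_{i}\C M_{t+i}v$, $\sum_i\C Y_{t+i-\frac12}v$, $\sum_i\C L_{t+i}v$ are finite-dimensional, builds from them a finite-dimensional $\G^{(t,t,t)}$-submodule $V'$ (Claim 2), takes a relation $(L_m+a_1L_{m+1}+\cdots+a_nL_{m+n})V'=0$ with $n$ minimal, and applies $L_m$ to force $n=0$; then $0=(m-i)L_{m+i}V'$ kills all high $L$'s, and similarly for $M$ and $Y$. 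Two further points you flag but do not resolve are handled in the paper by choices you omit: condition (a) (injectivity of some $M_{r_1}$ on $V$) comes for free by taking $r_1$ \emph{minimal} with $N_{r_1,r_2,r_3}\neq 0$, using that $N_{\tilde i,\tilde j,\tilde k}=0$ for $\tilde i<0$ because $M_0$ acts by $\nu_0\neq0$; and you cannot simply ``pick a simple submodule $V'$ of $V$'' (the space $V$ need not be finite-dimensional, so existence of a simple submodule is not automatic) --- the paper instead proves $\pi:\mathrm{Ind}(V)\to S$ is injective directly by the degree argument of Claim 1 (which holds without assuming $V$ simple) and only afterwards concludes that $V$ is simple.
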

\begin{proof}
First we prove $(1)\Rightarrow(5)$. Suppose that $S$ is a simple $\G$-module and there exists $t\in\N$ such that the actions of
$M_i,(1-\delta_{i,0})Y_{i-\frac{1}{2}},$ and $L_i$ for all $i\geq t$ are locally finite.
Then we can choose a nonzero $v\in S$ such that $L_tv=\lambda v$ for some $\lambda\in \C$.

Take any $j\in\Z$ with $j>t$ and we  denote
 \begin{eqnarray*}
 &&N_M=\sum_{m\in\N}\C L_t^mM_jv=\mathcal{U}(\C L_t)M_jv,\\&&
  N_Y=\sum_{m\in\N}\C L_{t}^mY_{j-\frac{1}{2}}v=\mathcal{U}(\C L_{t})Y_{j-\frac{1}{2}}v,\\&&
   N_L=\sum_{m\in\N}\C L_t^mL_jv=\mathcal{U}(\C L_t)L_jv,
 \end{eqnarray*}
 which are all finite-dimensional. By the definition of \eqref{def1.1}, it is easy to   get
 \begin{eqnarray*}
 (j+mt)M_{j+(m+1)t}v&=&[L_t,M_{j+mt}]v\\&=&
 L_tM_{j+mt}v-M_{j+mt}L_tv=(L_t-\lambda)M_{j+mt}v,\\
 (j+mt-\frac{t+1}{2})Y_{j+(m+1)t-\frac{1}{2}}v&=&[L_t,Y_{j+mt-\frac{1}{2}}]v\\&=&
 L_tY_{j+mt-\frac{1}{2}}v-Y_{j+mt-\frac{1}{2}}L_tv=(L_t-\lambda)Y_{j+mt-\frac{1}{2}}v,\\
 (j+(m-1)t)L_{j+(m+1)t}v&=&[L_t,L_{j+mt}]v\\&
 =&L_tL_{j+mt}v-L_{j+mt}L_tv=(L_t-\lambda)L_{j+mt}v,\quad \forall m\in\N,
 \end{eqnarray*}
which imply that
\begin{eqnarray*}
&&M_{j+mt}v\in N_M\Rightarrow M_{j+(m+1)t}v\in N_M,\
Y_{j+mt-\frac{1}{2}}v\in N_Y\Rightarrow Y_{j+(m+1)t-\frac{1}{2}}v\in N_Y,\\&&
   L_{j+mt}v\in N_L \Rightarrow L_{j+(m+1)t}v\in N_L\quad \mbox{for\ all}\ m\in\N \ \mbox{and} \ j>t.
\end{eqnarray*}
 Therefore, by induction on $m$, we obtain
 $$M_{j+mt}v\in N_M,\  Y_{j+mt-\frac{1}{2}}v\in N_Y, \ L_{j+mt}v\in N_L, \quad \forall m\in \N.$$
Then, it follows from the facts that $\sum_{m\in\N}\C M_{j+mt}v$, $\sum_{m\in\N}\C Y_{j+mt-\frac{1}{2}}v$ and  $\sum_{m\in\N}\C L_{j+mt}v$ are
finite-dimensional for $j>t$. Hence,
 \begin{eqnarray*}
&&\sum_{i\in\N}\C M_{t+i}v=\C M_{t}v+\sum_{j=t+1}^{2t}\Big(\sum_{m\in\N}\C M_{j+mt}v\Big),\\&&
\sum_{i\in\N}\C Y_{t+i-\frac{1}{2}}v=\C Y_{t-\frac{1}{2}}v+\sum_{j=t+1}^{2t}\Big(\sum_{m\in\N}\C Y_{j+mt-\frac{1}{2}}v\Big), \\&&
\sum_{i\in\N}\C L_{t+i}v=\C L_{t}v+\sum_{j=t+1}^{2t}\Big(\sum_{m\in\N}\C L_{j+mt}v\Big)
\end{eqnarray*}
 are all finite-dimensional. In fact, we can take $l\in\Z_+$ such that
\begin{equation}\!\label{clym3.1}
\aligned
&\sum_{i\in\N}\C M_{t+i}v\!=\!\sum_{i=0}^{l}\C  M_{t+i}v,\
 \sum_{i\in\N}\C Y_{t+i-\frac{1}{2}}v\!=\!\sum_{i=0}^{l}\C  Y_{t+i-\frac{1}{2}}v,\
\sum_{i\in\N}\C L_{t+i}v\!=\!\sum_{i=0}^{l}\C  L_{t+i}v.
\endaligned\!\!
\end{equation}
Now we write $V^\prime=\sum_{x_0,\ldots,x_l,y_0,\ldots,y_l,z_0,\ldots,z_l\in\N}\C M_t^{x_0}\cdots
  M_{t+l}^{x_l}Y_{t-\frac{1}{2}}^{y_0}\cdots Y_{t+l-\frac{1}{2}}^{y_l}L_t^{z_0}\cdots L_{t+l}^{z_l}v$, which is finite-dimensional by (1).
 \setcounter{clai}{0}
 \begin{clai}\label{claim2}
 $V^\prime$ is a (finite-dimensional) $\G^{(t,t,t)}$-module.
 \end{clai}
 To prove the claim, using the $\mathrm{PBW}$ Theorem,  any   $M_{t+s}v^\prime,Y_{t+s-\frac{1}{2}}v^\prime,L_{t+s}v^\prime$  with $s\in\N$ and $v^\prime\in V^\prime$
can be written respectively as a sum of vectors of the form:
 \begin{equation}\label{clym3.2}
\aligned
&M_{t+s}M_t^{x_0}\cdots
  M_{t+l}^{x_l}Y_{t-\frac{1}{2}}^{y_0}\cdots Y_{t+l-\frac{1}{2}}^{y_l}L_t^{z_0}\cdots L_{t+l}^{z_l}v,\\&
  Y_{t+s-\frac{1}{2}}M_t^{x_0}\cdots
 M_{t+l}^{x_l}Y_{t-\frac{1}{2}}^{y_0}\cdots Y_{t+l-\frac{1}{2}}^{y_l}L_t^{z_0}\cdots L_{t+l}^{z_l}v,\\&
 L_{t+s} M_t^{x_0}\cdots
  M_{t+l}^{x_l}Y_{t-\frac{1}{2}}^{y_0}\cdots Y_{t+l-\frac{1}{2}}^{y_l}L_t^{z_0}\cdots L_{t+l}^{z_l}v.
\endaligned
\end{equation}
Now we prove that all elements above lie in $V^\prime$. By \eqref{clym3.1},
we only need to show that the elements in \eqref{clym3.2} with $0\leq s\leq l$ lie in $V^\prime$.
This is clear for the first element in \eqref{clym3.2}. For the second element  in \eqref{clym3.2}, we have
\begin{equation*}
\aligned
 &Y_{t+s-\frac{1}{2}}M_t^{x_0}\cdots
  M_{t+l}^{x_l}Y_{t-\frac{1}{2}}^{y_0}\cdots Y_{t+l-\frac{1}{2}}^{y_l}L_t^{z_0}\cdots L_{t+l}^{z_l}v\\
 =~&M_t^{x_0}\cdots
  M_{t+l}^{x_l}Y_{t-\frac{1}{2}}^{y_0}\cdots Y_{t+s-\frac{1}{2}}^{y_s+1}\cdots Y_{t+l-\frac{1}{2}}^{y_l}L_t^{z_0}\cdots L_{t+l}^{z_l}v\\&
+[Y_{t+s-\frac{1}{2}},M_t^{x_0}\cdots
  M_{t+l}^{x_l}Y_{t-\frac{1}{2}}^{y_0}\cdots Y_{t+s-\frac{1}{2}}^{y_{s}}]Y_{t+s+\frac{1}{2}}^{y_{s+1}}\cdots Y_{t+l-\frac{1}{2}}^{y_l}L_t^{z_0}\cdots L_{t+l}^{z_l}v.
\endaligned
\end{equation*}
Then, it is clear that the second element in \eqref{clym3.2}   lies in $V^\prime$.
 For the third element  in \eqref{clym3.2}, one can easily check that
\begin{equation*}
\aligned
 &L_{t+s}M_t^{x_0}\cdots
  M_{t+l}^{x_l}Y_{t-\frac{1}{2}}^{y_0}\cdots Y_{t+l-\frac{1}{2}}^{y_l}L_t^{z_0}\cdots L_{t+l}^{z_l}v\\
 =~&M_t^{x_0}\cdots
  M_{t+l}^{x_l}Y_{t-\frac{1}{2}}^{y_0}\cdots Y_{t+l-\frac{1}{2}}^{y_l}L_t^{z_0}\cdots L_{t+s}^{z_s+1}\cdots L_{t+l}^{z_l}v\\&
+[L_{t+s},M_t^{x_0}\cdots
  M_{t+l}^{x_l}Y_{t-\frac{1}{2}}^{y_0}\cdots Y_{t+l-\frac{1}{2}}^{y_{l}}L_t^{z_0}\cdots L_{t+s}^{z_{s}}]L_{t+s+1}^{z_{s}+1}\cdots L_{t+l}^{z_l}v.
\endaligned
\end{equation*}
Using  induction, we can show that  all  terms in above equation  lie in $V^\prime$.
Hence  we can get  the third element in \eqref{clym3.2} also  lies in $V^\prime$.
Then Claim \ref{claim2}  is obtained.

It follows from Claim 1 that we can  choose
 a minimal $n\in\N$ such that
 \begin{equation}\label{lm3.3}
 (L_m+a_1L_{m+1}+\cdots + a_{n}L_{m+{n}})V^\prime=0
 \end{equation}
 for some $m\geq t$ and  $a_i\in \C$.
 Applying $L_m$ to \eqref{lm3.3}, one has
$$(a_1[L_m,L_{m+1}]+\cdots +a_{n}[L_m,L_{m+n}])V^\prime=0,$$
which implies $n=0$, that is, $L_mV^\prime=0$.
Then we have
$$0=L_iL_{m}V^\prime=[L_i,L_{m}]V^\prime+L_mL_iV^\prime=(m-i)L_{m+i}V^\prime,\quad \forall i\geq t,$$
namely, $L_{m+i}V^\prime=0$ for all $i>m$. Similarly, we have $M_{m+i}V^\prime=0$
 for all $i>t$ and $Y_{m+i-\frac{1}{2}}V^\prime=0$  for all $1\neq i>m$, respectively.
 For any $\tilde{i},\tilde{j},\tilde{k}\in \Z$, we consider the vector space
 $$N_{\tilde{i},\tilde{j},\tilde{k}}=\{v\in S\mid M_iv=(1-\delta_{j,0})Y_{j-\frac{1}{2}}v=L_kv=0 \quad \mbox{for\ all}\ i>\tilde{i},j>\tilde{j},k>\tilde{k}\}.$$
Clearly, $N_{{\tilde{i},\tilde{j},\tilde{k}}}\neq0$ for sufficiently large ${\tilde{i},\tilde{j}},\tilde{k}\in\Z$.
On the other hand, $N_{\tilde{i},\tilde{j},\tilde{k}}=0$ for all $\tilde{i}<0$ since we have $M_0v=\nu_0 v\neq0$ for any nonzero $v\in S$. Thus we can find a smallest nonnegative integer, saying $r_1$, and choose some  $r_2,r_3\geq r_1$ with $r_3-r_1\geq2(r_2-r_1)-1$ such that  $N_{r_1,r_2,r_3}\neq 0$. Denote  $d_1=r_3-r_1, d_2=r_2-r_1$ and  $V=N_{r_1,r_2,r_3}$.
Using $k>r_3,j>r_2$ and $l\geq 1$, it follows from $k+l-d_2-\frac{1}{2}>r_3+\frac{1}{2}-d_2\geq r_2-\frac{1}{2}$ and $j+l-d_2-1>r_2-d_2=r_1$  that we can easily check that
$$L_{k}(Y_{l-d_2-\frac{1}{2}}v)=(l-d_2-\frac{k+1}{2})Y_{k+l-d_2-\frac{1}{2}}v=0$$
and
$$
Y_{j-\frac{1}{2}}(Y_{l-d_2-\frac{1}{2}}v)=(l-d_2-j)M_{j+l-d_2-1}v=0,
$$
respectively.
Clearly, $Y_{l-d_2-\frac{1}{2}}v\in V$ for
all $l\geq 1$. Similarly, we can also obtain $M_{e-d_1}v\in V$ and $L_{e}v\in V$
for all $e\in \N$. Therefore, $V$ is a $\G_{d_1,d_2}$-module.

By the definition of $V$, we can obtain that the action of  $M_{t}$  on $V$ is injective. Since $S$ is simple
and generated by $V$, then there exists a canonical surjective map
$$\pi:\mathrm{ Ind}(V) \rightarrow S, \quad \pi(1\otimes v)=v,\quad \forall  v\in V.$$
Next we only need to show that $\pi$ is also injective, that is to say, $\pi$ as the canonical map is bijective.  Let $K=\mathrm{ker}(\pi)$. Obviously, $K\cap V=0$. If	$K\neq0$, we can choose a nonzero vector $v\in K\setminus V$ such that $\mathrm{deg}(v)=(\underline{i},\underline{j},\underline{k})$ is minimal possible.
Note that $K$ is a $\mathcal{G}$-submodule of $\mathrm{Ind}(V)$.
By Claim \ref{claim3.1} in  Theorem \ref{th1} and Remark \ref{rema3.3} we can create a new vector $u\in K$  with $\mathrm{deg}(u)\prec(\underline{i},\underline{j},\underline{k})$, which is a contradiction. This forces $K=0$,
that is, $S\cong \mathrm{Ind}(V)$. According to  the property of induced modules, we see that $V$ is simple as a $\mathcal{G}_{d_1,d_2}$-module.

Moreover, $(5)\Rightarrow(3)\Rightarrow(1)$, $(5)\Rightarrow(4)\Rightarrow(2)$
 and $(2)\Rightarrow(1)$ are clear. This completes the proof of the theorem.
\end{proof}

\begin{remark} \rm
From the above proof, we know that any simple module satisfying conditions in
Theorem \ref{th2} is determined by some simple module $V$ over a certain subalgebra $\G_{d_1,d_2}$.
The conditions of Theorem  \ref{th1}  imply that  $V$  can be viewed as
a simple module over some finite-dimensional solvable quotient algebra of $\G_{d_1,d_2}$. This reduces the study of such modules over $\mathcal{G}$ to the study of simple modules over the corresponding finite-dimensional algebras.
\end{remark}

\section{Some examples}
In this section, some examples of simple $\G_{d_1,d_2}$-modules are given. Then, by Theorem \ref{th1}, we can construct many new simple $\mathcal{G}$-modules.

First we describe highest weight modules and Whittaker modules as follows.
\begin{exam}
 Let ${\mathfrak{h}}=\mathrm{span}_{\C}\{L_0,M_0,C\}$ be the Cartan subalgebra of $\G$. For $\xi=(\xi(L_0),\nu_0\neq0,c)\in{\mathfrak{h}}^*$, we have the
Verma module $\mathcal{M}(\xi)=\mathcal{U}(\G)\otimes_{\mathcal{U}({\mathfrak{h}}+\G_{0,0})}\C_\xi$, where $L_i\C_\xi=M_i\C_\xi=Y_{i-\frac{1}{2}}\C_\xi$  for $i > 0$,
while $L_0\C_\xi=\xi(L_0)\C_\xi,M_0\C_\xi=\nu_0\C_\xi$ and $C\C_\xi=c\C_\xi$, respectively.
 The module $\mathcal{M}(\xi)$ has the
unique simple quotient  $\mathcal{L}(\xi)$, the unique (up to isomorphism) simple highest weight
module with highest weight $\xi$.  
These
modules correspond to the case $t=0$ in Theorem \ref{th2}.
\end{exam}

\begin{exam}
 Consider a nonzero $\xi:=(\lambda_1,\lambda_2,\mu_1,\mu_2,\nu_0\neq0,\nu_1\neq0,c)\in\C^7$.  Denote by $\mathcal{V}_\xi$
 the $\G_{1,1}$-module $\mathcal{U}(\G_{1,1})/I$,
where $I$  is the left ideal generated by $L_1-\lambda_1,L_2-\lambda_2,L_3,\ldots,
Y_{\frac{1}{2}}-\mu_1,Y_{\frac{3}{2}}-\mu_2,Y_{\frac{5}{2}},\ldots,M_1-\nu_1,M_2,\ldots,M_0-\nu_0,C-c$.
 It is easy to see that $\mathcal{V}_\xi$ is simple.
 The module  $\mathcal{V}_\xi$ obviously satisfies the conditions of Theorem \ref{th1} (with $t=d_1=d_2=1$).
 Hence, it follows from Theorem \ref{th1} that we obtain the corresponding simple induced $\G$-module $\mathrm{Ind}(\mathcal{V}_\xi)$.
When $c=0$, these are exactly the Whittaker modules over $\G$ constructed in  \cite{ZTL}.
\end{exam}

Now we consider some $t\in \Z_+,d_1,d_2\in\N$, and choose subsets $S_\lambda \subseteq \{1,\ldots,t+d_1\},S_\mu\subseteq \{-d_2+1,\ldots,t+d_2\}$ and disjoint subsets
$S_{\nu,0},S_{\nu,1}\subseteq \{-d_1,\ldots,t\}$ with $0,t\in S_{\nu,1}$. Set $S_\nu=S_{\nu,0}\cup S_{\nu,1}$.
Let $\bar{S}_\lambda=\{0,1,\ldots,t+d_1\}\setminus S_\lambda, \bar{S}_\mu=\{-d_2,\dots,t+d_2\}\setminus S_\mu$ and $\bar{S}_\nu=\{-d_1,\ldots,t\}\setminus S_\nu$.
Set $c\in \C$, $\lambda_i\in\C,i\in S_\lambda,\mu_i\in\C,i\in S_\mu$ and $\nu_i\in\C,i\in S_\nu$. Moreover, $i\in S_\nu$ with $\nu_i\neq0$ if and only if $i\in S_{\nu,1}$.
In addition, the following conditions are satisfied:

\noindent
{\rm (I)} for all $i,j\in S_\lambda,i\neq j$, we either have $i+j> t+d_1$  or  $i+ j\in S_\lambda$ and $\lambda_{i+j}=0$;

\noindent
{\rm (II)} for all $i\in S_\lambda,j\in S_\mu,\frac{i}{2}\neq j-\frac{1}{2}$, we either have  $i+j>t+d_2$ or $i+ j\in S_\mu$ and $\mu_{i+j}=0$;

\noindent
{\rm (III)} for all $i,j\in S_\mu,i\neq j$, we either have   $i+j-1> t$ or $i+ j-1\in S_{\nu,0}$ and $\nu_{i+j-1}=0$;

\noindent
{\rm (IV)} for all $i\in S_\lambda,j\in S_\nu\setminus \{0\}$,  we either have $i+j> t$ or  $i+ j\in S_{\nu,0}$ and $\nu_{i+j}=0$;

\noindent
{\rm (V)} for any $j\in \bar{S}_\lambda$, there exists a nonzero $i\in S_\nu$ such that $i+j\in \bar{S}_\nu\cup S_{\nu,1}$
 and  $i+j^\prime\in S_{\nu,0}$
 for all $j^\prime\in\bar{S}_\lambda$ with $j<j^\prime<t-i$ (if $t-i\in\bar{S}_\lambda$ we replace  $t-i$ by $j$ and these $j$ such that $i+j\in S_{\nu,1}$);

\noindent
{\rm (VI)} for any $j\in \bar{S}_\mu$, there exists $i\in S_\mu$ such that $i+j-1\in \bar{S}_\nu\cup S_{\nu,1}$
 and $i+j^\prime-1\in S_{\nu,0}$ for all $j^\prime\in\bar{S}_\mu$ with
 $j<j^\prime< t-i+1$  (if $t-i+1\in\bar S_\mu$ we  replace  $t-i+1$ by $j$  and these $j$ such that $i+j-1\in S_{\nu,1}$);


\noindent
{\rm (VII)} for any $j\in \bar{S}_\nu$, there exists $i\in S_\lambda$ such that $i+j\in \bar{S}_\nu\cup S_{\nu,1}$
 and    $i+j^\prime\in S_{\nu,0}$ for all $j^\prime\in \bar{S}_{\nu}$ with
 $j<j^\prime< t-i$ (if $t-i\in\bar S_\nu$ we replace  $t-i$  by $j$  and these $j$ such that $i+j\in S_{\nu,1}$).

For any $\underline{i}=(i_1,\ldots,i_t),\underline{j}=(j_1,\ldots,j_t),\underline{k}=(k_1,\ldots,k_t)\in \N^t,t\in\Z_+$, we can define the lexicographical order (which is {\em not} the one defined in Section 2) on $\N^t$ as follows:
\begin{equation*}
\underline{i}>\underline{j} \ \Leftrightarrow \  \mathrm{there}\ \mathrm{exists}\ r \ \mathrm{such}\  \mathrm{that} \
(i_s=j_s,\ \forall  1\leq s< r)\  \mathrm{and} \ i_r>j_r.
\end{equation*}
Denote $\epsilon_r\in\N^t$ with $1$ in the $r$'th position and $0$ elsewhere.

For any set $X=\{x_1,\ldots,x_n\}$, we denote by $|X|$ the number of  elements in $X$. Now we set $m=|\bar{S}_\lambda|,n=|\bar{S}_\mu|,l=|\bar{S}_\nu|$. Let $\bar{S}_\lambda=\{p_1=0,p_2,\ldots,p_m\}$
with $p_1<\cdots<p_m$,  $\bar{S}_\mu=\{q_1=-d_2,q_2,\ldots,q_n\}$ with $q_1<\cdots<q_n$  and
 $\bar{S}_\nu=\{r_1,r_2,\ldots,r_l\}$ with $r_1<\cdots<r_l$. Denote by $Q$ the $\G_{d_1,d_2}$-module $\mathcal{U}(\G_{d_1,d_2})/I$,
 where $I$ is the left ideal generated by $L_i-\lambda_i,Y_{j-\frac{1}{2}}-\mu_j,M_k-\nu_k,C-c$ with
 $i\in\N\setminus \bar{S}_\lambda,j\in (\N-d_2)\setminus \bar{S}_\mu,k\in (\N-d_1) \setminus \bar{S}_\nu$,
 where we make the convention that $\lambda_i=0$ for $i\notin S_\lambda$, $\mu_j=0$ for $j\notin S_\mu$
 and  $\nu_k=0$ for $k\notin S_{\nu,1}$.
  By the $\mathrm{PBW}$ Theorem,  a basis of $Q$ is given by the images of
 $$L^{\underline{i}}Y^{\underline{j}}M^{\underline{k}}=L^{i_1}_{p_1}\cdots L^{i_m}_{p_m}Y^{j_1}_{q_1-\frac{1}{2}}\cdots Y^{j_n}_{q_n-\frac{1}{2}}M^{k_1}_{r_1}\cdots M^{k_l}_{r_l},$$
 where  $\underline{i}:=(i_1,\cdots,i_m)\in\N^m,\underline{j}:=(j_1,\cdots,j_n)\in\N^n,\underline{k}:=(k_1,\cdots,k_l)\in\N^l$.
  Then a typical element $v\in Q$ can be written as
 \begin{equation}\label{v4.1}
 v=\sum_{\underline{i}\in\N^m,\underline{j}\in\N^n,\underline{k}\in\N^l}a_{\underline{i},\underline{j},
 \underline{k}}L^{\underline{i}}Y^{\underline{j}}M^{\underline{k}},
 \end{equation}
 with only finitely many $a_{\underline{i},\underline{j},
 \underline{k}}$ nonzero. Set $\mathrm{supp}(v)=\{(\underline{i},\underline{j},\underline{k})\mid a_{\underline{i},\underline{j},
 \underline{k}}\neq0\}$ and denote by $\mathrm{deg}(v)$ the maximal element in $\mathrm{supp}(v)$ under the following total order
\begin{equation*}
\aligned
(\underline{i},\underline{j},\underline{k})\succ(\underline{l},\underline{m},\underline{n})\ \Leftrightarrow \
  \underline{i}>\underline{l}\quad
 \mathrm{or}\quad \underline{i}=\underline{l} \ \mathrm{and} \ \underline{j}> \underline{m}
  \quad \mathrm{or}\quad \underline{i}=\underline{l}, \underline{j}= \underline{m}\ \mathrm{and}\ \underline{k}> \underline{n}
\endaligned
\end{equation*}
for any $(\underline{i},\underline{j},\underline{k}),(\underline{l},\underline{m},\underline{n})\in \N^m \times\N^n \times\N^l.$
We also make the convention that the zero element does not have a degree. Now  we can prove the following lemma.
 \begin{lemm}\label{lemm4.1}
The $\G_{d_1,d_2}$-module $Q$ is simple.
\end{lemm}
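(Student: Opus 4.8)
The plan is to run the usual minimal-degree argument. Since the image $\bar 1$ of $1$ generates $Q$ (clear from the PBW basis above), it suffices to show that every nonzero $\G_{d_1,d_2}$-submodule $W\subseteq Q$ contains $\bar 1$. So I would fix a nonzero $v\in W$ whose degree $\mathrm{deg}(v)\in\N^m\times\N^n\times\N^l$ is minimal for $\succ$ -- such $v$ exists because $\succ$ is a lexicographic product of finitely many copies of the well-order on $\N$, hence a well-order -- and prove $\mathrm{deg}(v)=(\underline 0,\underline 0,\underline 0)$; then $v$ is a nonzero scalar multiple of $\bar 1$, forcing $W=Q$.

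Assume $\mathrm{deg}(v)\neq(\underline 0,\underline 0,\underline 0)$. I would produce $x\in\mathcal{U}(\G_{d_1,d_2})$ with $xv\neq 0$ and $\mathrm{deg}(xv)\prec\mathrm{deg}(v)$, a contradiction. The reduction runs in the three stages dictated by the three factors of the order. While the $L$-part of $\mathrm{deg}(v)$ is nonzero, let $L_p$ be the highest-index free $L$-generator in the leading PBW monomial of $v$; condition (V) gives a nonzero $a\in S_\nu$ with $a+p\in\bar{S}_\nu\cup S_{\nu,1}$ (and $a+p'\in S_{\nu,0}$ for the intermediate free indices $p'$), and one takes $x=M_a-\nu_a$. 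As $M_a$ commutes with every $Y$ and $M$ and kills $\bar 1$ up to the scalar $\nu_a$, $x$ acts on a PBW basis element $L^{\underline x}Y^{\underline y}M^{\underline z}\bar 1$ simply as $[M_a,L^{\underline x}]Y^{\underline y}M^{\underline z}\bar 1$, whose top term extracts $M_{a+p}$; the clause $a+p\in\bar{S}_\nu\cup S_{\nu,1}$ is exactly what keeps $M_{a+p}$ from being annihilated or out of range, so the leading term survives with strictly smaller $L$-part. Iterating reduces the $L$-part to $\underline 0$. Next, with the $L$-part zero, let $Y_{q-\frac{1}{2}}$ be the highest free $Y$-generator present; condition (VI) gives $b\in S_\mu$ with $b+q-1\in\bar{S}_\nu\cup S_{\nu,1}$, and $x=Y_{b-\frac{1}{2}}-\mu_b$ works -- it commutes with the $M$'s, hence acts through $[Y_{b-\frac{1}{2}},\,\cdot\,]$ on the $Y$-part, extracting $M_{b+q-1}$, which survives by the same clause, while leaving the $L$-part zero. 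Finally, with the $L$- and $Y$-parts zero, condition (VII) gives $c\in S_\lambda$ with $c+r\in\bar{S}_\nu\cup S_{\nu,1}$, where $M_r$ is the highest free $M$-generator present, and $x=L_c-\lambda_c$ reduces the $M$-part, producing only $M$'s. So after finitely many steps $v$ is carried to a nonzero multiple of $\bar 1$.

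The real work -- and the reason (I)--(VII) take their intricate form -- is the cancellation-and-range bookkeeping. At each step one must check that the intended leading term is genuinely nonzero once all contributions are summed: those from the various summands of the commutator of $x$ with a PBW basis element, and those from the non-leading monomials of $v$. One must also verify that each newly created free generator is strictly lower in $\succ$ than the one removed, so that the process terminates -- this is what the auxiliary clauses ``$a+p'\in S_{\nu,0}$ for $p<p'<t-a$'' and the parenthetical exceptions in (V), (VI), (VII) are designed to guarantee. Conditions (I)--(IV) are the compatibility constraints on $\lambda,\mu,\nu$ that validate the stated PBW basis of $Q$, and they reappear when simplifying the commutators above. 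The main obstacle I foresee is precisely this verification: organizing the case analysis by which of (V)--(VII) is invoked, and confirming in each case that no cancellation destroys the leading coefficient.
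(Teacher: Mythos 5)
Your argument is essentially the paper's own proof: induct on $\mathrm{deg}(v)$ with respect to the stated total order on $\N^m\times\N^n\times\N^l$, and in the three cases apply the elements $M_a-\nu_a$, $Y_{b-\frac{1}{2}}-\mu_b$, $L_c-\lambda_c$ supplied by conditions (V)--(VII) to lower the degree strictly until a nonzero multiple of $\bar 1$ is reached, noting as you do that (I)--(IV) are what make $Q$ a $\G_{d_1,d_2}$-module with the stated PBW basis. The only (harmless) deviation is that you target the \emph{highest}-index free generator occurring in the leading monomial, whereas the paper takes the \emph{lowest} one, i.e.\ the most significant position of its lexicographic order --- the choice that the clauses ``$i+j'\in S_{\nu,0}$ for $j<j'<t-i$'' in (V)--(VII) are tailored to; with your choice those same clauses are instead what suppress the potentially interfering cross-terms arising from the non-leading monomials of $v$, and the reduction still goes through.
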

\begin{proof}
 Since $\bar{S}_\lambda$ is not empty, $Q\neq0$. By conditions ${\rm (I)}$-${\rm (IV)}$, $Q$ is a module of $\G_{d_1,d_2}$.  Let $v\in Q\setminus \{0\}$ and we can write $v$   as in \eqref{v4.1} that is a nonzero linear combination of basis elements. Set $\mathrm{deg}(v)=(\underline{i},\underline{j},\underline{k})$.
\begin{case}
 $\underline{i}\neq\underline{0}$.
Setting $x:=\mathrm{min}\{s>0\mid i_s\neq0\}$, then we have  $p_x\in \bar{S}_\lambda$.
According to condition ${\rm (V)}$, there exists nonzero $y\in S_\nu$ such that
 $p_x+y\in\bar{S}_\nu\cup S_{\nu,1}$ and $y+j^\prime\in S_{\nu,0}$ for all  $j^\prime\in\bar{S}_\lambda$ with
 $p_x<j^\prime< t-y$  (if $t-y\in\bar{S}_\lambda$ we replace  $t-y$ by $p_x$ and these $p_x$ such that $p_x+y\in S_{\nu,1}$). Applying $M_y-\nu_y$ to $v$, which gives
 $$
\mathrm{deg}((M_y-\nu_y)v)=\left\{\begin{array}{llll}
(\underline{i}-\epsilon_x,\underline{j},\underline{k}+\epsilon_s),&\mbox{if}\
p_x+y=r_s\in \bar{S}_\nu,\\[4pt]
(\underline{i}-\epsilon_x,\underline{j},\underline{k}),&\mbox{if}\
p_x+y\in {S}_{\nu,1}.
\end{array}\right.
$$
\end{case}

\begin{case}
 $\underline{i}=\underline{0}$.
Setting $x:=\mathrm{min}\{s>0\mid j_s\neq0\}$, then  we have $q_x\in \bar{S}_\mu$.
According to  condition ${\rm (VI)}$, there exists $y\in S_\mu$ such that
 $q_x+y-1\in\bar{S}_\nu\cup S_{\nu,1}$ and $y+j^\prime-1\in S_{\nu,0}$ for all $j^\prime\in\bar{S}_\mu$ with
$q_x<j^\prime< t-y+1$ (if $t-y+1\in\bar S_\mu$ we replace $t-y+1$ by $q_x$ and these $q_x$ such that $q_x+y-1\in S_{\nu,1}$). Applying $Y_{y-\frac{1}{2}}-\mu_{y}$ to $v$, which gives
 $$
\mathrm{deg}((Y_{y-\frac{1}{2}}-\mu_{y})v)=\left\{\begin{array}{llll}
(\underline{0},\underline{j}-\epsilon_x,\underline{k}+\epsilon_s),&\mbox{if}\
q_x+y=r_s\in \bar{S}_\nu,\\[4pt]
(\underline{0},\underline{j}-\epsilon_x,\underline{k}),&\mbox{if}\
q_x+y\in {S}_{\nu,1}.
\end{array}\right.
$$
\end{case}

\begin{case}
 $\underline{i}=\underline{j}=\underline{0}$.
Setting $x:=\mathrm{min}\{s>0\mid k_s\neq0\}$, then we have $r_x\in \bar{S}_\nu$.
According to  condition ${\rm (VII)}$, there exists $y\in S_\lambda$ such that
 $r_x+y\in\bar{S}_\nu\cup S_{\nu,1}$ and $y+j^\prime$ for all $j^\prime\in\bar{S}_\nu$  with $r_x<j^\prime< t-y$ (if $t-y\in\bar S_\nu$ we replace $t-y$ by $r_x$ and these $r_x$ such that $r_x+y\in S_{\nu,1}$). Applying $L_y-\lambda_y$ to $v$, which gives
 $$
\mathrm{deg}((L_y-\lambda_y)v)=\left\{\begin{array}{llll}
(\underline{0},\underline{0},\underline{k}-\epsilon_x+\epsilon_s),&\mbox{if}\
r_x+y=r_s\in \bar{S}_\nu,\\[4pt]
(\underline{0},\underline{0},\underline{k}-\epsilon_x),&\mbox{if}\
r_x+y\in {S}_{\nu,1}.
\end{array}\right.
$$
\end{case}
Repeating this
process inductively (with respect to the degree of $v$), we can reach a nonzero element with degree $\underline{0}$ and this element can generate the whole module $Q$. Therefore $Q$ is a simple $\G_{d_1,d_2}$-module.
 \end{proof}
 \begin{remark}\label{re4.2}
Note that the module $Q$ obviously satisfies the conditions of Theorem \ref{th1}. Then one can  obtain the corresponding simple induced $\mathcal{G}$-module $\mathrm{Ind}(Q)$. It follows from Theorem \ref{th1} that  the actions of all the elements $M_i,Y_{j-\frac{1}{2}},L_k$ for $i>t,j>t+d_2,k>t+d_1$
are locally nilpotent and the actions of all the elements $M_i,Y_{j-\frac{1}{2}},L_k$ for  $i<-d_1,j<-d_2,k<0$ are injective and free on $\mathrm{Ind}(Q)$ (hence non-locally finite). Moreover, by using Lemma \ref{lemm4.1},  the actions of all
the elements $M_i,Y_{j-\frac{1}{2}},L_k$ for $i\in S_{\nu,0},j\in S_\mu,k\in S_\lambda,$ where $\nu_i=\mu_j=\lambda_k=0$ are locally nilpotent, and the actions of all the elements $M_i,Y_{j-\frac{1}{2}},L_k$ for $i\in S_{\nu,1},j\in S_\mu,k\in S_\lambda,$ where $\nu_i,\mu_j,\lambda_k\neq0$ are locally finite. By these facts, we  construct a lot of new simple modules, which are not isomorphic to the
simple weight modules and simple Whittaker modules (as far as we know simple $\G$-modules).
 \end{remark}
When $d_1,d_2>0$ or $t>1$, the simple modules $\mathrm{Ind}(Q)$ are new simple $\mathcal{G}$-modules as we stated in Remark \ref{re4.2}.
Finally, we give an example about new simple modules.
 \begin{exam}
For $d_1=d_2=0$ and $t=2$,  we can define
\begin{eqnarray*}
S_\lambda=\{2\},\ \bar{S}_\lambda=\{0,1\}, \ S_\mu=\{1\},\ \bar{S}_\mu=\{0,2\}, \ S_{\nu,1}=\{0,2\},\ S_{\nu,0}=\{1\},\ \bar{S}_\nu=\emptyset
\end{eqnarray*}
and $\lambda_2, \mu_1,  \nu_0, \nu_1, \nu_2, c\in\C$ such that  $\nu_1=0$, $\nu_0,\nu_2\neq0$. Then, we obtain a $\mathcal{G}_{0,0}$-module
$Q$. It is easy to see that all conditions ${\rm (I)}$-${\rm (VII)}$ are satisfied. Then $\mathcal{G}_{0,0}$-module
$Q$ is a simple module. Therefore, the induced module $\mathrm{Ind}(Q)$ is simple, which is a new simple module of $\G$.
\end{exam}

\section{The $W$-algebra $W(2,2)$}
In this section, we shall construct a large class of simple modules over the $W$-algebra $W(2,2)$.  As a result, we not only recover many known simple modules including highest weight modules and Whittaker modules that presented in \cite{WL} and \cite{ZD},  but also construct a lot
of new simple modules for  the $W$-algebra $W(2,2)$.

 The {\it $W$-algebra  $W(2,2)$}
$\mathcal{W}$ is defined to be a Lie algebra
 with a  $\C$-basis $\{L_{m},W_{m},C_{W}\mid m\in
\Z\}$ and the following nonvanishing Lie brackets:
\begin{eqnarray*}
&&[L_{m},L_{n}] = (n-m) L_{m+n}+ \delta_{m+n,0} \frac{1}{12}
(m^3 - m)C_{W},\\&&
[L_{m},W_{n}] = (n-m) W_{m+n}+ \delta_{m+n,0} \frac{1}{12}
(m^3 - m)C_{W}.
\end{eqnarray*}
It was introduced by Zhang and Dong in \cite{ZD} for the study of the classification of
moonshine type vertex operator algebras generated by two weight $2$ vectors.

Using lexicographical total order and  reverse lexicographical total order defined in Section 2,
 we induce a   {\em principal total order}  (which is {\em not} the one we described in Section 2) on $\mathbb{M}\times\mathbb{M}$, and denoted by $\succ$:
\begin{equation*}\aligned
(\underline{i},\underline{j}) \succ (\underline{l},\underline{m})\  \Leftrightarrow \ \   &
 (\underline{j},\mathbf{w}(\underline{j})) \succ (\underline{m},\mathbf{w}(\underline{m}))
\quad \mathrm{or}\quad \underline{j}=\underline{m} \
\mathrm{and }\ \underline{i} > \underline{l},\quad \forall \underline{i},\underline{j},\underline{l},\underline{m}\in\mathbb{M}.
\endaligned
\end{equation*}

We shall construct some new simple $\mathcal{W}$-modules with locally finite actions of elements $W_i,L_j$ for sufficiently large $i,j\in \Z$.
For any $d\in\N$, we write
$$\mathcal{W}_d=\sum_{i\in\N}(\C W_{i-d}\oplus\C L_i)\oplus\C C_W.$$
Letting $V$ be a simple $\mathcal{W}_d$-module, then we have the  induced   $\mathcal{W}$-module
$$\mathrm{Ind}(V)=\mathcal{U}(\mathcal{W})\otimes_{\mathcal{U}(\mathcal{W}_d)}V.$$
Moreover, we always assume that the actions of $W_0$ and $C_W$ on $V$ are scalars $h_W$ and $c_W$ respectively.
Now we  give a description of the following  results for the $W$-algebra $W(2,2)$.
\begin{theo}\label{th51}
Assume that $d\in\N$ and $V$ is a simple $\mathcal{W}_d$-module. If  there exists  $t\in\N$  such that

\noindent
{\rm (a)}  $
\left\{\begin{array}{llll}  \mbox{the\ action\ of}\ W_t\ \mbox{on}\ V \mbox{is\  injective},&\mbox{if}\ t\neq0,\\[4pt]
2h_W+\frac{n^2-1}{12}c_W\neq0,\ \forall n\in\Z\setminus\{0\},&\mbox{if}\ t=0,
\end{array}\right.
$

\noindent
{\rm (b)}   $W_iv=L_jv=0$ for  all $i>t$  and  $j>t+d$,

\noindent
then we have

\noindent
{\rm (1)} $\mathrm{Ind}(V)$ is a simple $\mathcal{W}$-module;

\noindent
{\rm (2)}  the actions of $W_i,L_j$  on $\mathrm{Ind}(V)$  for all  $i>t$  and  $j>t+d$  are locally nilpotent.
\end{theo}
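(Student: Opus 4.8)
The plan is to follow exactly the template established in the proof of Theorem \ref{th1}, adapting the combinatorics to the $W$-algebra setting where there are only two families of generators $\{L_m\}$ and $\{W_m\}$ instead of three. First I would set up the PBW basis: every nonzero element of $\mathrm{Ind}(V)$ is uniquely of the form $\sum_{\underline{i},\underline{k}\in\mathbb{M}} W^{\underline{i}}L^{\underline{k}}v_{\underline{i},\underline{k}}$ with $v_{\underline{i},\underline{k}}\in V$, almost all zero, where $W^{\underline{i}}=\cdots W_{-d-2}^{i_2}W_{-d-1}^{i_1}$ and $L^{\underline{k}}=\cdots L_{-2}^{k_2}L_{-1}^{k_1}$, and I would use the principal total order on $\mathbb{M}\times\mathbb{M}$ defined just above the theorem statement to define $\mathrm{deg}(v)$. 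The central assumption that $W_0$ and $C_W$ act by scalars $h_W$ and $c_W$ makes $\mathcal{W}_d$ effectively a two-generator-family algebra with a finite-dimensional solvable quotient acting on $V$, just as in Remark after Theorem \ref{th2}.

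For part (1), the heart is the analogue of Claim \ref{claim3.1}: given $v\in\mathrm{Ind}(V)\setminus V$ with $\mathrm{deg}(v)=(\underline{i},\underline{k})$, I would show that if $\underline{k}\neq\underline{0}$ and $\hat{k}=\min\{s:k_s\neq0\}$, then applying $W_{\hat{k}+t}$ strictly lowers the degree to $(\underline{i},\underline{k}^{\prime\prime})$, using $[W_m,L_n]$ and the injectivity of $W_t$ (when $t\neq0$); the case $t=0$ uses instead that $[W_0,L_n]$ acts on $V$ by the nonzero scalar coming from $2h_W+\frac{n^2-1}{12}c_W$, which is precisely why condition (a) splits into two subcases. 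If $\underline{k}=\underline{0}$ and $\underline{i}\neq\underline{0}$ with $\hat{i}=\max\{s:i_s\neq0\}$, then applying $L_{\hat{i}+t+d}$ (which kills $V$ but has $[L_{\hat{i}+t+d},W_{-\hat{i}-d}]$ acting nontrivially on $V$, again via the bracket $[L_m,W_n]=(n-m)W_{m+n}+\cdots$) lowers the degree to $(\underline{i}^{\prime},\underline{0})$. The key estimates are on the weight function $\mathbf{w}$: for $\mathbf{w}(\underline{z})<\mathbf{w}(\underline{k})$ one gets a strictly smaller weight after the bracket, and for equal weights one compares via the reverse lexicographic order exactly as in the $\G$ case. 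Iterating the claim, any nonzero $v$ is pushed into $\mathcal{U}(\mathcal{W})v\cap V$, and simplicity of $V$ over $\mathcal{W}_d$ finishes (1). Part (2) is the routine direct computation: $W_i$ and $L_j$ for $i>t$, $j>t+d$ shift indices upward by positive amounts under bracketing with the negative-index generators, so repeated application annihilates any fixed PBW monomial, giving local nilpotence.

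The main obstacle I anticipate is the bookkeeping in the $t=0$ case of condition (a): when $t=0$ the operator $W_0$ acts as the scalar $h_W$ rather than injectively, so the degree-lowering step for the $L$-part must instead exploit $[W_0, L_n] = (n-0)W_n + \delta_{n,0}\frac{n^3-n}{12}C_W$ combined with $W_nv=0$ for $n>0$ on $V$ — actually the relevant nonvanishing scalar must be extracted from $[L_{-n},W_n]$ type brackets acting within $V$, which is where the quantity $2h_W+\frac{n^2-1}{12}c_W$ enters after symmetrizing $[L_n,W_{-n}]+[L_{-n},W_n]$-style manipulations; one must check this scalar is exactly what governs whether the top term survives. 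A secondary subtlety is verifying that the order $\succ$ on $\mathbb{M}\times\mathbb{M}$ declared for this section (which differs from Section 2's order on triples) interacts correctly with the bracket computations — in particular that the primed vectors $\underline{k}^{\prime\prime}$, $\underline{i}^\prime$ are genuinely $\succ$-smaller. Both of these are mechanical once set up, but they are the places where a careless adaptation of the $\G$ proof would fail, so I would write them out with care. Everything else transfers verbatim from the proof of Theorem \ref{th1}.
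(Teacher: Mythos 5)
Your proposal is correct and follows essentially the same route the paper intends: the proof of Theorem \ref{th51} is omitted in the text precisely because it is the two-generator-family adaptation of the proof of Theorem \ref{th1}, and your degree-reduction claim (apply $W_{\hat k+t}$ to strip the $L$-part, then $L_{\hat i+t+d}$ to strip the $W$-part, with the scalar $[L_{\pm n},W_{\mp n}]$-bracket condition $2h_W+\frac{n^2-1}{12}c_W\neq0$ replacing injectivity of $W_t$ when $t=0$) is exactly that adaptation. The points you flag as requiring care -- the $t=0$ case and the compatibility of the order on $\mathbb{M}\times\mathbb{M}$ with the weight estimates -- are indeed the only places where the argument differs from the Schr\"odinger--Virasoro case, and both work out as you describe.
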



Denote by $\mathcal{W}^{(x,y)}$ the subalgebra generated by $W_i,L_j$
   with  $i\geq x$ and $j\geq y$.
Subsequently, we characterize  simple modules over the $W$-algebra $W(2,2)$.
\begin{theo}\label{th52}
Suppose that $S$ is a simple  $\mathcal{W}$-module with  $2h_W+\frac{n^2-1}{12}c_W\neq0$ for all  $n\in\Z\setminus\{0\}$. Then the following conditions are equivalent:

\noindent
{\rm (1)}  There exists $t\in\Z$ such that the actions of $W_i,L_i,i\geq t$ on $S$ are locally finite.

\noindent
{\rm (2)}    There exists $t\in\Z$ such that the actions of $W_i,L_i,i\geq t$ on $S$ are locally nilpotent.

\noindent
{\rm (3)}  There exist $x,y\in\Z$ such that  $S$ is a  locally finite $\mathcal{W}^{(x,y)}$-module.

\noindent
{\rm (4)}    There exist $x,y\in\Z$ such that  $S$ is a  locally nilpotent $\mathcal{W}^{(x,y)}$-module.

\noindent
{\rm (5)} There exists $d\in\N$  and a simple $\mathcal{W}_d$-module $V$ satisfying the conditions in Theorem \ref{th51} such that
$S\cong\mathrm{Ind}(V)$.
\end{theo}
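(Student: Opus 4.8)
The plan is to mimic the structure of the proof of Theorem \ref{th2}, since $W(2,2)$ is structurally a ``degenerate'' analogue of $\G$ in which the $Y$-current is absent and the $M$-current has been replaced by the weight-$2$ current $W$. The implications $(5)\Rightarrow(3)\Rightarrow(1)$, $(5)\Rightarrow(4)\Rightarrow(2)$ and $(2)\Rightarrow(1)$ are immediate from Theorem \ref{th51} and the elementary observations recorded in Section 2, so the whole content is the implication $(1)\Rightarrow(5)$. First I would fix a simple $\mathcal{W}$-module $S$ with $2h_W+\tfrac{n^2-1}{12}c_W\neq0$ for all $n\in\Z\setminus\{0\}$ and a $t\in\N$ such that $W_i,L_i$ act locally finitely for $i\geq t$. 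Choosing a nonzero $v\in S$ with $L_tv=\lambda v$, I would run the same propagation argument as in Theorem \ref{th2}: from $[L_t,W_{j+mt}]=(j+mt-2t)W_{j+(m+1)t}$ (note the shift is by $-2t$ here rather than the $-t$ appearing for $M$ in $\G$, but this only changes the finitely many ``bad'' residues one must treat separately) and $[L_t,L_{j+mt}]=(j+(m-1)t)L_{j+(m+1)t}$, one gets that each of $\sum_{m}\C W_{j+mt}v$ and $\sum_m\C L_{j+mt}v$ lies in a fixed finite-dimensional space, hence $\sum_{i\in\N}\C W_{t+i}v$ and $\sum_{i\in\N}\C L_{t+i}v$ are finite-dimensional. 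Picking $l$ so that both stabilize at index $t+l$, I would set $V'=\sum \C W_t^{x_0}\cdots W_{t+l}^{x_l}L_t^{z_0}\cdots L_{t+l}^{z_l}v$, which is finite-dimensional, and prove the analogue of Claim \ref{claim2}: $V'$ is a (finite-dimensional) $\mathcal{W}^{(t,t)}$-module. The proof is the same PBW-bracket bookkeeping, now with only two families of generators.

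Next, from finite-dimensionality of $V'$ I would extract, exactly as in Theorem \ref{th2}, a relation $(L_m+a_1L_{m+1}+\cdots+a_nL_{m+n})V'=0$ with $n\in\N$ minimal and $m\geq t$; applying $L_m$ and using $[L_m,L_{m+i}]=i\,L_{2m+i}$ forces $n=0$, i.e.\ $L_mV'=0$, and then $[L_i,L_m]=(m-i)L_{m+i}$ gives $L_{m+i}V'=0$ for $i>m$, while $[L_i,W_m]=(m-i)W_{m+i}$ gives $W_{m+i}V'=0$ for $i>t$ (after enlarging $m$ so that $W_mV'=0$ too, which the same argument over $W$ supplies). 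Then I would introduce
\[
N_{\tilde i,\tilde k}=\{v\in S\mid W_iv=L_kv=0\ \text{for all}\ i>\tilde i,\ k>\tilde k\},
\]
which is nonzero for large $\tilde i,\tilde k$, and is zero for $\tilde i<0$ because $W_0v=h_Wv$ together with the hypothesis $2h_W+\tfrac{n^2-1}{12}c_W\neq0$ forces $W_0$ to act injectively (here one uses $[L_n,W_{-n}]=-2nW_0+\delta_{\ldots}\tfrac{n^3-n}{12}C_W$ to see the would-be obstruction vanish only when the displayed scalar is zero). I would then take the smallest $r_1\in\N$ admitting some $r_2\geq r_1$ with $N_{r_1,r_2}\neq0$, set $d=r_2-r_1$ and $V=N_{r_1,r_2}$, check using the bracket $[L_k,W_{l-d}]=(l-d-k)W_{k+l-d}$ and $[L_k,L_l]=(l-k)L_{k+l}$ that $W_{e-d}v\in V$ and $L_ev\in V$ for all $e\in\N$, so $V$ is a $\mathcal{W}_d$-module, and that $W_{r_1}$ acts injectively on $V$ (for $r_1\neq 0$ this is by minimality of $r_1$; for $r_1=0$ it is the scalar condition), so condition (a) of Theorem \ref{th51} holds with $t:=r_1$, and (b) holds by construction.

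Finally, since $S$ is simple and generated by $V$, the canonical map $\pi:\mathrm{Ind}(V)\to S$, $\pi(1\otimes v)=v$, is surjective; I would show it is injective by the degree-minimization argument: if $K=\ker\pi\neq0$, pick $v\in K\setminus V$ of minimal degree with respect to the principal total order on $\mathbb{M}\times\mathbb{M}$ introduced for $W(2,2)$ in Section 5, and apply the $W(2,2)$-analogue of Claim \ref{claim3.1} (which holds by the remark that Claim \ref{claim3.1} does not require simplicity of $V$, see Remark \ref{rema3.3}) to produce a nonzero element of $K$ of strictly smaller degree, a contradiction; hence $K=0$ and $S\cong\mathrm{Ind}(V)$, and by the universal property $V$ is forced to be simple over $\mathcal{W}_d$. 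I expect the main obstacle to be purely bookkeeping: verifying that the $W(2,2)$-version of Claim \ref{claim3.1} goes through with the two-component order and the weight-$2$ bracket $[L_m,W_n]=(n-m)W_{m+n}$ (which, unlike $[L_m,M_n]=nM_{m+n}$ in $\G$, contributes an extra $-m$ and a central term when $m+n=0$); one must be slightly careful that the chosen raising operator $W_{\hat k+t}$ kills $V$ and that the leading term of the bracket $[W_{\hat k+t},L^{\underline k}]$ is non-degenerate, which is exactly where the injectivity of $W_t$ (equivalently the scalar condition when $t=0$) is used. Everything else is a faithful transcription of Sections 2--3 with the $Y$-current deleted.
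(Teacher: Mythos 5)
Your proposal is correct and is precisely the adaptation of the proof of Theorem \ref{th2} that the paper intends (the paper itself states Theorem \ref{th52} without proof, deferring to the analogy with Section 3), including the right uses of the hypothesis $2h_W+\frac{n^2-1}{12}c_W\neq0$: once via $n=1$ to get $h_W\neq0$ and hence $N_{\tilde i,\tilde k}=0$ for $\tilde i<0$, and once to secure condition (a) of Theorem \ref{th51} when $r_1=0$. The only quibble is the coefficient in $[L_t,W_{j+mt}]$, which is $(j+(m-1)t)$ rather than $(j+mt-2t)$ --- harmless, as you yourself note, since it only changes which finitely many residues need separate treatment.
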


\small 
\end{document}